\numberwithin{equation}{section}
\title{	Projective Hessian and Sasakian manifolds}
\author{Pavel Osipov}
\newsavebox{\ssa}
\newcommand{\dt}{\frac{\partial}{\partial t}}
\newcommand{\g}{\mathfrak{g}}
\newcommand{\R}{\mathbb R}
\newcommand{\CC}{\mathbb{C}}
\theoremstyle{definition}
\newtheorem{theorem}{Theorem}[section]
\newtheorem{lemma}[theorem]{Lemma}
\newtheorem{proposition}[theorem]{Proposition}
\newtheorem{cor}[theorem]{Corollary}
\newtheorem{defin}[theorem]{Definition}
\newtheorem{example}[theorem]{Example}
\theoremstyle{remark}
\newtheorem{rem}{Remark}[section]
\begin{document}
	
		\maketitle
		
		\begin{abstract}
			The Hessian geometry is the real analogue of the Kähler one. Sasakian geometry is an odd-dimensional counterpart to the Kähler geometry. In the paper, we study the connection between projective Hessian and Sasakian manifolds analogous to the one between Hessian and Kähler manifolds. In particular, we construct a Sasakian structure on $TM\times \R$ from a projective Hessian structure on $M$. We are interested in the case of invariant structure on Lie groups. We define semi-Sasakian Lie groups as a generalization of Sasakian Lie groups. Then we construct a semi-Sasakian structure on a group $G\ltimes \R^{n+1}$ for a projective Hessian Lie group $G$. We describe examples of homogeneous Hessian Lie groups and corresponding semi-Sasakian Lie groups. The big class of projective Hessian Lie groups can be constructed by homogeneous regular domains in $\R^n$. The groups $\text{SO}(2)$ and $\text{SU}(2)$ belong to another kind of examples. Using them, we construct semi-Sasakian structures on the group of the Euclidean motions of the real plane and the group of isometries of the complex plane.
			
		\end{abstract}

	\section{Introduction}
	A {\bfseries flat affine manifold} is a differentiable manifold equipped with a flat, torsion-free connection. Equivalently, it is a manifold equipped with an atlas such that all translation maps between charts are affine transformations (see \cite{FGH} or \cite{shima}). 
	A {\bfseries Hessian manifold} is an affine manifold with a Riemannian metric which is locally equivalent to a Hessian of a function. Any Kähler metric can be defined as a complex Hessian of a plurisubharmonic function. Thus, the Hessian geometry is a real analogue of the Kähler one. 
	
	A Kähler structure $(I,g^T)$ on $TM$ can be constructed by a Hessian structure $(\nabla,g)$ on $M$ (see \cite{shima}). The correspondence 
	$$
	\text{r}:\{\text{Hessian manifolds}\} \to \{\text{K\"ahler manifolds}\}
	$$
	$$
\ \ \ \	\ (M,\nabla,g)\  \ \to \ \ (TM,I,g^T)
	$$
	is called the {\bfseries r-map}. In particular, this map associates special Kähler manifolds to special real manifolds (see \cite {AC}). In this case, r-map describes a correspondence between the scalar geometries for supersymmetric theories in dimension {5 and 4.} See \cite{CMMS} for details on the r-map and supersymmetry.
	
	Hessian manifolds have many different application: in supersymmetry (\cite{CMMS}, \cite{CM}, \cite{AC}), in convex programming
	(\cite{N}, \cite{NN}), in the Monge-Ampère Equation (\cite{F1}, \cite{F2}, \cite{G}), in the WDVV equations (\cite{T}).

	A {\bfseries Riemannian cone} is a Riemannian manifold $(M\times \R^{>0}, t^2g_M+dt^2)$, where $t$ is a coordinate on $\R^{>0}$ and $g_M$ is a Riemannian metric on $M$. Riemannian cones have important applications in supegravity (\cite{ACDM}, 
	\cite{ACM}, \cite{CDM}, \cite{VDMV}). Geometry and holonomy of pseudo-Riemannian cones are studied in \cite{ACGL} and \cite{ACL}.

	The contact geometry is an odd-dimensional counterpart to the symplectic geometry. A manifold $M$ is {\bfseries contact} if and only if there exists a symplectic form $\omega$ on the cone $M\times \R^{>0}$ satisfying $\lambda_q \omega = q^2 \omega$, where $\lambda_q (m\times t) = m\times qt$. Moreover, if there exists an $\R^{>0}$-invariant complex structure $I$ on $M\times \R^{>0}$ such that $g=\omega(I\cdot, \cdot )$ is positive defined, that is, $(M\times\R^{>0},I,\omega)$ is a Kähler manifold then the manifold $M$ is called {\bfseries Sasakian}.	Any Sasakian manifold is a Riemannian cone (see \cite{OV}). Note that our definitions of contact and Sasakian manifolds are not standard but equivalent. See \cite{5sasaki} or \cite{BG} for standard definitions and \cite{ACHK} or \cite{OV} for equivalence of them.
		
				A {\bfseries Hessian cone} is a Hessian manifold $(M\times\R^{>0},\nabla,g=t^2g_M+dt^2)$ where $g_M$ is a metric on $M$, $t$ a coordianate on $\R^{>0}$, and $\nabla$ a flat torsion free connection satisfying
	$$
	\nabla \left(t\dt \right)=\text{Id}.
	$$
	We say that a Riemannian manifold $(M,g)$ is a {\bfseries projective Hessian manifold} if there exists a connection $\nabla$ on $M\times \R^{>0}$ such that $(M\times\R^{>0},\nabla,g=t^2g_M+dt^2)$ is a Hessian cone.
	
	We study a relation between projective Hessian and Sasakian manifolds analogous to the one between Hessian and Kähler manifolds. 
			\begin{theorem}
				Let $(M, g)$ be a projective Hessian manifold. Then $TM\times\R$ admits a structure of a Sasakian manifold.
			\end{theorem}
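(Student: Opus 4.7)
The plan is to apply the r-map to the Hessian cone $(M\times\R^{>0},\nabla,t^2 g_M+dt^2)$, producing a K\"ahler structure $(I,g^T)$ on $T(M\times\R^{>0})$, and then to identify this K\"ahler manifold as a K\"ahler cone over $TM\times\R$. By the characterization of Sasakian manifolds recalled in the introduction, this is enough.

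For the bookkeeping identification, one notes that $T(M\times\R^{>0})\cong TM\times T\R^{>0}\cong TM\times\R\times\R^{>0}$, where the base point $t\in\R^{>0}$ will play the role of the cone coordinate while the fiber $\dot t\in T_t\R^{>0}\cong\R$ supplies the extra $\R$. The candidate cone $\R^{>0}$-action is the differential of the flow of $t\partial_t$ on $M\times\R^{>0}$, given explicitly by $(x,t,\dot x,\dot t)\mapsto(x,e^s t,\dot x,e^s\dot t)$; its orbits are parametrized by $\dot t/t\in\R$, so the orbit space is indeed $TM\times\R$.

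The heart of the proof is to check, using the condition $\nabla(t\partial_t)=\text{Id}$, that the r-map K\"ahler metric is homothetic of weight $2$ under this flow while the complex structure is invariant. In local affine coordinates $u^\alpha$ for $\nabla$, the connection condition forces $t\partial_t=u^\alpha\partial_\alpha$ (the Euler field) up to an affine translation. The cone identity $g(t\partial_t,\cdot)=t\,dt=d(t^2/2)$ then yields, for any local Hessian potential $\phi$, the equation $u^\alpha\partial_\alpha\phi-\phi=t^2/2$; expanding $\phi$ into components homogeneous in $u$ (and absorbing a possible linear term into the affine gauge) one concludes that $\phi=t^2/2$ is homogeneous of degree $2$, whence $g_{\alpha\beta}=\partial_\alpha\partial_\beta\phi$ is homogeneous of degree $0$. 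In induced coordinates $(u^\alpha,v^\alpha)$ on the tangent bundle, the r-map formulas $g^T=g_{\alpha\beta}(u)(du^\alpha du^\beta+dv^\alpha dv^\beta)$ and $I(\partial_{u^\alpha})=\partial_{v^\alpha}$ are then manifestly homothetic of weight $2$ and invariant, respectively, under the complete lift $\xi^C=u^\alpha\partial_{u^\alpha}+v^\alpha\partial_{v^\alpha}$ of $t\partial_t$ (which coincides with $t\partial_t+\dot t\partial_{\dot t}$ in product coordinates).

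The most delicate step, in my view, is passing from these affine-local computations to a global conclusion: the affine charts and the potential $\phi$ are not globally defined, whereas $g^T$, $I$, and the flow of $t\partial_t$ are. Once the local weight-$2$ homothety is established, gluing is automatic; but one still needs to verify that the resulting global K\"ahler metric actually takes cone form $t^2\tilde g+dt^2$ on $(TM\times\R)\times\R^{>0}$, that is, that $\xi^C$ integrates to the standard scaling of an honest $\R^{>0}$-factor and that a natural cross-section $TM\times\R\hookrightarrow T(M\times\R^{>0})$ exists, so that the Sasakian structure on $TM\times\R$ is produced in the sense of the introduction's definition rather than merely up to homothety.
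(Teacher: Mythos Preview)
Your proposal is correct and follows the same route as the paper: apply the r-map to the Hessian cone, show that the resulting K\"ahler structure on $T(M\times\R^{>0})\cong (TM\times\R)\times\R^{>0}$ has $\R^{>0}$-invariant complex structure and weight-$2$ homothetic metric, and then invoke the cone characterization of Sasakian manifolds (the paper's Proposition~3.5). The only difference is in implementation: you compute in local affine coordinates, whereas the paper argues coordinate-free via the formula $g^T(X,Y)=\pi^*g(X,Y)+\pi^*g(IX,IY)+\cdots$ (Proposition~2.5) together with the commuting square $\pi\circ\mu_q=\lambda_q\circ\pi$ (Proposition~3.6), which reduces the homothety of $g^T$ to $\lambda_q^*g=q^2g$ in one line; the $\R^{>0}$-invariance of $I$ comes from the $\R^{>0}$-invariance of the radiant connection $\nabla$ (Proposition~3.2), which is exactly your observation that $\xi=u^\alpha\partial_\alpha$ in centered affine charts.

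One small point: your detour through the potential, arguing that $\phi=t^2/2$ by ``expanding into homogeneous components'', is not rigorous for a merely smooth $\phi$ on a chart not containing the origin, and is in any case unnecessary. All you actually use is that $g_{\alpha\beta}$ is homogeneous of degree $0$ in $u$, and this follows immediately from $\lambda_q^*g=q^2g$ once you know $\lambda_q(u)=qu$; the paper's diagrammatic argument sidesteps the potential entirely. The globalization concern you raise is handled just as in the paper: since $g^T$, $I$, and the $\R^{>0}$-action are global objects, the local computations establish identities of global tensors, and Proposition~3.5 then converts the scaling property into the honest cone form $t^2\tilde g+dt^2$ on $(TM\times\R)\times\R^{>0}$.
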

	This theorem is closely related to the r-map. Namely, we have a diagram 
	$$
	\begin{CD}
	M\times \R^{>0} @>\text{r}>> T(M\times\R^{>0})\\
	@AA con A @AA con A @.\\
	M @>>> TM\times\R
	\end{CD},
	$$
	where vertical arrows associate Riemannian cones to the corresponding Riemannian manifolds. The theorem implies that the Riemannian manifold $T(M\times\R^{>0})$ with the metric constructed by r-map is actually a cone over $TM\times \R$.

	Then we work with Lie algebras and groups equipped with invariant structures on them. There are different descriptions of an {\bfseries invariant affine structure} on a Lie algebra $\mathfrak{g}$: a torsion-free flat connection on $\mathfrak{g}$, an étale affine representation $\mathfrak{g}\to \mathfrak{aff}(\R^n)$, where $n$ is dimension of $\mathfrak{g}$, or a structure of a left symmetric algebra on $\mathfrak{g}$, that is, a multiplication on $\mathfrak{g}$ satisfying 
		$$
		XY-YX=[X,Y]
		$$
		and
		$$
		X(YZ)-(XY)Z=Z(XY)-(ZX)Y
		$$
		for any $X,Y,Z \in \mathfrak{g}$ (see \cite{burde} or \cite{Bu2}).
		
	Then we adapt r-map and Theorem 1.1. to the case of Lie algebras and groups. 
	
	\begin{theorem}[\cite{BD}]
		Let $\mathfrak{g}$ be an $n$-dimensional Lie algebra endowed with an affine structure $\nabla$ and $\eta$ the corresponding étale affine representation. Then there exists an invariant complex structure on $\mathfrak{g}\ltimes_\eta \R^n$. 
	\end{theorem}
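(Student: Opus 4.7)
The plan is to write down an explicit complex structure $J$ on $\mathfrak{h}:=\mathfrak{g}\ltimes_\eta \R^n$ that swaps the two summands, and then verify integrability through the Nijenhuis tensor, using that the affine structure on $\mathfrak{g}$ translates into a left-symmetric product. First, I would use the dictionary recalled just before the statement: the affine connection $\nabla$ gives a product $XY$ on $\mathfrak{g}$ satisfying $XY-YX=[X,Y]$ and the left-symmetry axiom, while the étale affine representation $\eta$ gives a linear isomorphism $t:\mathfrak{g}\xrightarrow{\sim}\R^n$ (the translational part at the distinguished base point) such that the linear part of $\eta(X)$ corresponds to the left multiplication $L_X Y = XY$. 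Pulling back along $t$, I identify $\mathfrak{h}=\mathfrak{g}\oplus \mathfrak{g}$ with bracket
$$
[(X_1,Y_1),(X_2,Y_2)]=\bigl([X_1,X_2],\, X_1Y_2-X_2Y_1\bigr),
$$
where the first copy of $\mathfrak{g}$ carries its Lie bracket, the second is abelian, and the mixed term is given by the LSA product. The left-symmetry axiom is precisely what ensures this bracket satisfies Jacobi.

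Next, I would define $J:\mathfrak{h}\to\mathfrak{h}$ by $J(X,Y)=(-Y,X)$. Then $J^2=-\mathrm{Id}$ by inspection, and $J$ is plainly a vector space endomorphism, hence gives an almost complex structure that is automatically invariant because it is defined algebraically on $\mathfrak{h}$ (left-translation being trivial on the Lie algebra level).

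The substance of the proof is the vanishing of the Nijenhuis tensor
$$
N(A,B)=[JA,JB]-J[JA,B]-J[A,JB]-[A,B].
$$
I would expand $N(A,B)$ for $A=(X_1,Y_1)$, $B=(X_2,Y_2)$ using the bracket formula above. The $\mathfrak{g}$-component collects the terms $[Y_1,Y_2]-[X_1,X_2]$ together with commutators of LSA products like $X_1X_2-X_2X_1$ and $Y_2Y_1-Y_1Y_2$; these cancel directly using $UV-VU=[U,V]$. The $\R^n$-component collects expressions of the form $Y_2X_1-Y_1X_2\pm [Y_1,X_2]\pm [X_1,Y_2]\mp(X_1Y_2-X_2Y_1)$, and once each commutator $[U,V]$ is expanded as $UV-VU$ the terms pair off and vanish. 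Notably, only the commutativity relation of the LSA is needed for $N\equiv 0$; left-symmetry has already been used to guarantee Jacobi.

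The main obstacle is bookkeeping: one has to pick the sign conventions in $J$ and in the semidirect bracket consistently so that the two components of $N(A,B)$ actually collapse to zero rather than to something that only vanishes on abelian $\mathfrak{g}$. Choosing $J(X,Y)=(-Y,X)$ (rather than $(Y,-X)$) is what makes the mixed terms $[Y_1,X_2]$ and $[X_1,Y_2]$ enter $N$ with the right signs to match $X_1Y_2-X_2Y_1$. Once this sign is correctly installed, the computation is a direct expansion with no further input beyond the LSA axioms.
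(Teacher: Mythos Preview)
Your proposal is correct and matches the paper's approach: the paper defines the very same almost complex structure $I(X\ltimes_\eta Y)=-i^{-1}Y\ltimes_\eta iX$ (your $J(X,Y)=(-Y,X)$ after the identification $i:\mathfrak{g}\to\R^n$) and then cites \cite{CO} and \cite{BD} for integrability rather than writing out the Nijenhuis computation in-line. Your explicit verification that $N\equiv 0$ using only the relation $XY-YX=[X,Y]$, with left-symmetry entering solely through Jacobi for the semidirect bracket, is exactly the argument those references contain; so you have supplied the details the paper outsources, by the same route.
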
 

	It is immediately follows from the Theorem 1.2 that if a Lie group $G$ admits a left invariant affine structure then there exists a left invariant complex structure on a semidirect product $G\ltimes_\theta \R^n$. We show that $\theta$ is a linear part of the corresponding affine action of $G$ and get the following addition to results from \cite{BD}.   
		\begin{theorem}
			Let $G$ be a simply connected Lie group equipped with a left invariant affine structure, $\theta$ the linear part of the corresponding affine action of $G$. Then there exists a left invariant integrable complex structure $I$ on the group 
			$$
			G\ltimes_\theta \R^n\simeq TG
			$$ 
			such that $I$ swaps vertical and horizontal tangent subbundles.
		\end{theorem}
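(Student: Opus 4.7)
The plan is to deduce Theorem 1.3 directly from Theorem 1.2 and to read off the swap of vertical and horizontal from the explicit form of the Benoist--Dekimpe complex structure on the Lie algebra.

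First, I would integrate the étale affine representation $\eta:\mathfrak{g}\to\mathfrak{aff}(\R^n)$ attached to the left invariant affine structure to an affine action $A:G\to\operatorname{Aff}(\R^n)$; this is possible because $G$ is simply connected. Writing $A(g)v=\theta(g)v+\tau(g)$ extracts the linear part $\theta$. Differentiating shows that $d\theta$ equals the linear part of $\eta$, so the Lie algebra of $G\ltimes_\theta\R^n$ is precisely the algebra $\mathfrak{g}\ltimes_\eta\R^n$ to which Theorem 1.2 applies. That theorem produces an invariant complex structure $I_0$ on $\mathfrak{g}\oplus\R^n$; in the Benoist--Dekimpe construction $I_0$ interchanges the two summands once $\R^n$ is identified with $\mathfrak{g}$ via the translation part of $\eta$, and this exchange is the algebraic source of the swap asserted in the theorem.

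Next, I would propagate $I_0$ by left translation to a left invariant almost complex structure $I$ on $G\ltimes_\theta\R^n$. Integrability is automatic: the Nijenhuis tensor of a left invariant almost complex structure is itself left invariant and determined by its value at the identity, which vanishes by the algebraic integrability supplied by Theorem 1.2. Hence $I$ is a genuine integrable complex structure.

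To relate this to $TG$, I would combine the product decomposition $G\ltimes_\theta\R^n=G\times\R^n$ with the left trivialization $TG\simeq G\times\mathfrak{g}$ and the identification $\R^n\simeq\mathfrak{g}$ coming from $\eta$, obtaining a fiber-preserving diffeomorphism $\Phi:G\ltimes_\theta\R^n\xrightarrow{\sim}TG$. At each point $(g,v)$ the differential of left translation in $G\ltimes_\theta\R^n$ identifies the tangent space with $\mathfrak{g}\oplus\R^n$, carrying the first summand to the horizontal subspace of $TG$ (the tangent along $G\times\{v\}$) and the second summand to the vertical subspace (the fiber direction $T_gG$). Because $I$ acts in this frame by $I_0$ and $I_0$ swaps the two summands, $I$ swaps horizontal and vertical subbundles at every point.

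The main obstacle will be the bookkeeping between two distinct trivializations of tangent spaces: the left trivialization of $TG$, whose natural tangent Lie group structure uses the adjoint representation, and the left trivialization of the semidirect product $G\ltimes_\theta\R^n$, which uses $\theta$ on the fiber factor. Since $\theta$ is in general not the adjoint representation, the Lie group $G\ltimes_\theta\R^n$ is \emph{not} isomorphic as a group to $TG$, so the $\simeq$ in the theorem must be interpreted as a fiber-preserving diffeomorphism; one then has to verify by a direct computation of the differential of left translation that $\Phi$ intertwines the two horizontal/vertical decompositions.
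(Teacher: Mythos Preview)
Your strategy coincides with the paper's: both derive the theorem from Theorem~1.2 (Theorem~4.4 in the body) together with an explicit identification of $G\ltimes_\theta\R^n$ with $TG$ (the paper packages this as Proposition~4.6). The one substantive difference is how that identification is carried out. You use the left trivialization $TG\simeq G\times\mathfrak g$, correctly observe that this is governed by the adjoint action rather than $\theta$, and therefore downgrade $\simeq$ to a fibre-preserving diffeomorphism, leaving the match of horizontal/vertical splittings as a computation. The paper instead trivializes $TG$ by parallel transport for the flat connection $\nabla$: in this trivialization $TG\simeq G\times\R^n$ and the induced multiplication is precisely $(g_1,X_1)(g_2,X_2)=(g_1g_2,\,X_1+\theta(g_1)X_2)$, so $G\ltimes_\theta\R^n\simeq TG$ holds as Lie groups and your flagged obstacle evaporates. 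In that trivialization the $\nabla$-horizontal subbundle is the left-invariant extension of $\mathfrak g\oplus 0$ and the vertical subbundle that of $0\oplus\R^n$, so the swap is read off directly from $I(X\ltimes Y)=-i^{-1}Y\ltimes iX$. Your assertion that the two are ``\emph{not} isomorphic as a group'' is thus too strong; with the $\nabla$-trivialization they are, and this is the cleaner way to resolve the bookkeeping you anticipated.
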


			Note that a Kähler structure on a the group $G\ltimes_{\theta^*} \left(\R^n\right)^*$ is constructed by an invariant Hessian structure on $G$ in \cite{medina}. The corresponding complex structure on $G\ltimes_{\theta^*} \left(\R^n\right)^*$ depends on the Hessian metric on $G$. In our case, the complex structure on $G\ltimes_\theta \R^{n}$ depends only on the affine structure on $G$. 
			
	{\bfseries A Hessian Lie group} $(G,\nabla, g)$ is a Lie group $G$ endowed with a left invariant affine structure $\nabla$ and a left invariant Hessian metric $g$.
			
		\begin{theorem}
			Let $G$ be an $n$-dimensional simply connected Lie group equipped with a left invariant affine structure $\nabla$ and $\theta$ the linear part of the corresponding affine action of $G$. Then there exists a left invariant Kähler metric on $G\ltimes_\theta \R^n=TG.$
		\end{theorem}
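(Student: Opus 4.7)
My plan is to build on Theorem~1.3 by supplementing its complex structure with a compatible left-invariant Riemannian metric, using a Lie-group version of the classical r-map.

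First, I would invoke Theorem~1.3 to obtain the left-invariant integrable complex structure $I$ on $G\ltimes_\theta \R^n\cong TG$. The key feature is that $I$ exchanges the two summands: a basis element $X\in\mathfrak{g}$ maps to its image $v_X\in\R^n$ under the translation component of the \'etale affine representation $\eta$ associated to $\nabla$, and $Iv_X=-X$. The $\R^n$-factor is an abelian ideal, and the semidirect-product bracket satisfies $[X,v_Y]=v_{XY}$ and $[v_X,v_Y]=0$, where $XY=\nabla_X Y$ denotes the left-symmetric-algebra product corresponding to $\nabla$.

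Second, I would construct a compatible metric. Starting from a positive-definite inner product $\langle\cdot,\cdot\rangle$ on $\mathfrak{g}$, I extend it to $\mathfrak{g}\ltimes_\theta\R^n$ by declaring the two summands orthogonal and setting $\langle v_X,v_Y\rangle:=\langle X,Y\rangle$. This makes $I$ an isometry and produces a left-invariant Hermitian metric $g^T$ on $TG$.

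The crucial step is to verify that the fundamental $2$-form $\omega(U,V)=g^T(IU,V)$ is closed. A case analysis via the Chevalley--Eilenberg formula $d\omega(U,V,W)=-\sum_{\mathrm{cyc}}\omega([U,V],W)$ shows that $d\omega$ vanishes automatically on triples with three horizontal arguments or with at least two vertical ones; the only nontrivial case is a triple of type (horizontal, horizontal, vertical), where closedness reduces to the identity
$$
\langle X,YZ\rangle-\langle Y,XZ\rangle=\langle [X,Y],Z\rangle \qquad\text{for all } X,Y,Z\in\mathfrak{g}.
$$

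The main obstacle, and the genuine content of the theorem, is thus to exhibit a positive-definite inner product on $\mathfrak{g}$ satisfying this compatibility with the LSA product of $\nabla$; equivalently, to produce a left-invariant Hessian metric on $(G,\nabla)$ out of the bare affine data. If such a canonical inner product can be extracted from the left-symmetric algebra (for instance via a Koszul-type construction on $(\mathfrak{g},\cdot)$), the preceding steps assemble it together with $I$ into the desired left-invariant K\"ahler structure on $TG$.
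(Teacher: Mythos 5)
Your reduction is correct as far as it goes: with the complex structure $I$ of Theorem 1.3, the brackets $[X,v_Y]=v_{\nabla_XY}$, $[v_X,v_Y]=0$, and a metric making the two summands orthogonal and isometric, the Chevalley--Eilenberg computation does show that $d\omega=0$ is equivalent to
$$
\langle X,\nabla_YZ\rangle-\langle Y,\nabla_XZ\rangle=\langle[X,Y],Z\rangle ,
$$
which is exactly the condition that the left-invariant metric determined by $\langle\cdot,\cdot\rangle$ be Hessian for $\nabla$ (total symmetry of $\nabla g$). The gap is the final step: you leave the existence of such an inner product conditional on a ``Koszul-type construction'' from the left-symmetric algebra alone, and no such construction exists in general. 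Indeed the statement cannot be proved from the affine data by itself: for the standard complete left-symmetric product on the Heisenberg algebra ($e_1e_2=e_3$, all other products zero, $[e_1,e_2]=e_3$), your identity with $X=e_1$, $Y=e_2$, $Z=e_3$ forces $\langle e_3,e_3\rangle=0$, so no positive-definite solution exists. The version of the theorem the paper actually proves (Theorem 6.2 in Section 6) carries the missing hypothesis: $(G,\nabla,g)$ is assumed to be a \emph{Hessian} Lie group, and the value of $g$ at the identity is precisely the inner product you need. So you have correctly located the decisive ingredient, but it must be assumed, not derived; the introduction's phrasing of the statement is the misleading one.

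Granting that hypothesis, your argument closes up and is genuinely different from the paper's. The paper never verifies $d\omega=0$ algebraically: it writes $g$ locally as $\mathrm{Hess}\,\varphi$ in flat coordinates, invokes Proposition 2.3 to conclude that $g^T=\mathrm{Hess}_{\mathbb C}(4\pi^*\varphi)$ is K\"ahler, and then proves left-invariance of $g^T$ in two pieces --- invariance under $\mathrm{id}\ltimes\mathbb{R}^n$ because $\pi^*\varphi$ is constant on fibers, and invariance under $G\ltimes 0$ via the coordinate-free formula of Proposition 2.5 (whose real part is exactly your metric: orthogonal summands, each isometric to $(\mathfrak{g},g_e)$). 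Your Lie-algebra route makes invariance automatic at the cost of checking closedness by hand; the paper's route gets closedness for free from the r-map at the cost of a separate invariance argument. Either is a complete proof once the left-invariant Hessian metric is supplied as data.
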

		The construction from Theorem 1.4. is a homogeneous analogue of r-map.
	
	For adapting Theorem 1.1 to the case of Lie algebras we introduce definitions of semi-sasakian Lie algebras and groups. 
	
	Let $\g$ be a $2n+1$-dimensional Lie algebra, $D$ a  derivation of $\g$, and $\omega$ 2-form on $\g$ and $\eta$ a 1-form on $\g$. If 
	$$
	d\omega =0, \ \ \ \omega+D^*\omega-d\eta = 0  \ \ \ \text{and} \ \ \ \eta\wedge \omega^n \ne 0
	$$
	then we say that a collection $(\g,D, \omega,\eta)$ is a {\bfseries semi-contact Lie algebra}. Note that if $D=0$ then we have the pair $(\g,\eta)$ such that $\eta(d\eta)^n\ne 0$ i.e. $(\g,\eta)$ is a contact Lie algebra.
	 
	The definition of semi-contact Lie algebras is closely related to the definition of lcs Lie algebras. A {\bfseries locally conformally symplectic} (shortly, {\bfseries lcs}) {\bfseries algebra}  is a $2n$-dimensional Lie algebra $\g$ endowed with a 2-form $\omega$ and 1-form $\vartheta$ such that 
	$$
	\Omega^n\ne 0, \ \ \ d\vartheta=0, \ \  \text{and} \ \ \ d\Omega = \vartheta \omega \ \ \ \text{(See \cite{BD} or \cite{ABP})}.
	$$
	According to results of \cite{ABP}, a collection $(\g,D, \omega,\eta)$ is a semi-contact Lie algebra if and if $(\R\ltimes_D \g, \omega+\vartheta \wedge \eta)$ is an lcs Lie algebra, where $\vartheta(r\ltimes_D v)=r$, and any lcs Lie algebra arises by this way. Thus, there is a one-to-one correspondence between lcs and semicontact Lie algebras. 	If $(\g,\Omega,\vartheta)$ is an lcs Lie algebra then we say that $(G,\Omega,\vartheta)$ is an {\bfseries lcs Lie group}.

	A {\bfseries semi-contact Lie group} is a triple $(G,\theta, \Omega)$, where $G$ is a Lie algebra $\theta$ be an action of $\R^{>0}$ on $G$ and $\omega$ a symplectic form on $\R^{>0}\ltimes_\theta G$ such that $\omega$ is invariant with respect to the left action of $G$ and satisfying $\lambda_q^* \omega =q^2 \omega$, where $\lambda_q x= (q\ltimes 1 )(x)$. If $\theta=id$ then $G,\Omega$ is a contact Lie group. See \cite{D} for more information on contact Lie algebras and groups.

		\begin{theorem}
			Let $\g$ be a Lie algebra of left invariant vector fields on a Lie group $G$, $D$ a derivation of $\g$, $\theta=\exp D$ the corresponding automorphism of $G$, $\omega$ and $\eta$ left invariant $2$-form and $1$-form correspondingly. Then the following conditions are equivalent:
			\begin{itemize}
				\item [(i)] $(\g,D,\omega,\eta)$ is a semicontact Lie algebra.
				\item[(ii)] $\left(\R^{>0}\ltimes_\theta G, \Omega=\omega+t^{-1}\eta\wedge dt,2t^{-1}dt
				\right)$ is an lcs Lie group.
				\item[(iii)] $(G,\theta, \hat\Omega=t^2\omega+tdt\wedge\eta)$ is a semi-contact Lie group.

			\end{itemize}
		\end{theorem}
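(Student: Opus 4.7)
The plan is to prove the three-way equivalence by establishing $(i)\Leftrightarrow(ii)$ first and then $(ii)\Leftrightarrow(iii)$, using quite different techniques for each step.

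For $(i)\Leftrightarrow(ii)$, I would canonically identify the Lie algebra of $\R^{>0}\ltimes_\theta G$ with the abstract semidirect sum $\R\ltimes_D\g$: the left-invariant vector field $t\,\partial/\partial t$ generates the $\R$ factor, and the condition $\theta=\exp D$ determines the action. The left-invariant Maurer--Cartan form $t^{-1}dt$ on $\R^{>0}$ matches the dual generator $\vartheta$ with $\vartheta(r\ltimes_D v)=r$, so the left-invariant $2$-form $\Omega=\omega+t^{-1}\eta\wedge dt$ translates (up to the scalar rescaling of $\vartheta$ needed to absorb the factor $2$ in the Lee form) to $\omega+\vartheta\wedge\eta$ on $\R\ltimes_D\g$. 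The equivalence is then exactly the lcs/semi-contact Lie algebra correspondence already stated in the paper just before the theorem.

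For $(ii)\Leftrightarrow(iii)$, both $\Omega$ and $\hat\Omega$ live on the same Lie group $\R^{>0}\ltimes_\theta G$, and I would compare them directly. Since the Lee form $2t^{-1}dt=d(\log t^2)$ is exact, the conformal rescaling $\Omega\mapsto t^2\Omega$ is the standard gauge transformation converting an lcs form into a closed one (modulo an exact correction accounted for by the primitive of the Lee form). Concretely, I would compute
\[
d\hat\Omega \;=\; t^2\,d\omega + t\,dt\wedge(2\omega-d\eta)
\]
and check that, after identifying $d\omega$ and $d\eta$ on the twisted product using the Chevalley--Eilenberg differential on $\R\ltimes_D\g$ (which contributes the $D^*$-terms), the vanishing of $d\hat\Omega$ is equivalent to $d\Omega=2t^{-1}dt\wedge\Omega$. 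The $G$-invariance and the scaling $\lambda_q^*\hat\Omega=q^2\hat\Omega$ are immediate from the explicit $t$-dependence. Non-degeneracy of both $\Omega$ and $\hat\Omega$ reduces to $\eta\wedge\omega^n\ne 0$ by expanding the top power and noting that the pure-$\omega$ part vanishes for dimension reasons, leaving a nonzero multiple of $\eta\wedge\omega^n\wedge dt$.

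The main obstacle I expect is keeping the conventions straight. In particular, precisely which extension of the left-invariant forms $\omega,\eta$ from $G$ to the full Lie group $\R^{>0}\ltimes_\theta G$ is being used (pullback via the projection versus left-invariant extension under the twisted multiplication) affects both the Chevalley--Eilenberg computation and the interaction with the explicit $t$-factors, and determines how the factor $2$ in the Lee form squares with the factor $1$ in the semi-contact identity $\omega+D^*\omega=d\eta$. Once these conventions are fixed consistently, all three conditions reduce to the same algebraic data on $\g$, namely $d_\g\omega=0$, $\omega+D^*\omega=d_\g\eta$, and $\eta\wedge\omega^n\ne 0$, and the three-way equivalence follows by direct verification.
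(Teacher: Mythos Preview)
Your plan is essentially the same as the paper's proof. The paper obtains $(ii)\Leftrightarrow(iii)$ from its Proposition~5.5, which is exactly the conformal rescaling computation you describe (write $\Omega=t^{-2}\hat\Omega$ and differentiate), and obtains $(i)\Leftrightarrow(ii)$ by invoking the lcs/semi-contact correspondence of Proposition~5.2 after identifying $\R\ltimes_D\g$ with the left-invariant fields on $\R^{>0}\ltimes_\theta G$. The one point worth noting is how the paper handles the factor-of-$2$ issue you flag: rather than rescaling $\vartheta$, it chooses the identification so that $1\ltimes_D 0$ corresponds to $\tfrac12\,t\,\partial/\partial t$ (not $t\,\partial/\partial t$), which makes $\vartheta=2t^{-1}dt$ on the nose and aligns the semi-contact identity $\omega+D^*\omega=d\eta$ with the Lee form $2t^{-1}dt$ without further adjustment.
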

		
		A {\bfseries locally conformally K\"ahler} (shortly, {\bfseries lck}) Lie algebra is an lcs $(\mathfrak{h},\Omega,\vartheta)$ Lie algebra endowed with a complex structure $I$ such that 
		$$
		g:=\Omega(*,I*)
		$$ 
		is a positive definite symmetric bilinear form (see \cite{HK}). The Lie group corresponding to an lck Lie algeba is called an lck Lie group. Note that homogeneous lck manifolds of reductive Lie groups are classified in \cite{ACHK}. 
		
	A {\bfseries semi-Sasakian Lie algebra} is a semi-contact Lie algebra $(\g,\eta,\omega)$ with a left-invariant integrable  almost complex structure $I$ on $(\g\ltimes_D \R)$ such that 
	$$
	g=\hat\Omega(*,I*)
	$$ 
	is a positive definite symmetric bilinear form, where $\hat\Omega=\omega+\vartheta \wedge \eta$. As above, there is a one-to one correspondence between semi-Sasakian algebras $(\g,D, \omega,\eta)$ and lck algebras $(\R\ltimes_D \g, \omega+\vartheta \wedge \eta)$.

	A {\bfseries semi-Sasakian Lie group} is a semi-contact Lie group $(G,\theta,\hat\Omega)$ with a left invariant complex structure $I$ on $\R^{>0}\ltimes_\theta G$ such that $(\R^{>0}\ltimes_\theta G,\hat\Omega,I)$ is a K\"ahler manifold. If $\theta=id$ then $G$ is called a {\bfseries Sasakian Lie group}. Note that in this case $g=t^2g_G+dt^2$ according to Proposition 3.5.

	\begin{cor}
		Let $\g$ be a Lie algebra of the invariant vector fields on a Lie group $G$, $D$  the derivation of $\g$, $\theta=\exp D$ the corresponding automorphism of $G$, $I$ a complex structure on $\R\ltimes \g$, $\omega$ and $\eta$ left invariant $2$-form and $1$-form correspondingly. Then the following conditions are equivalent:
		\begin{itemize}
			\item [(i)] $(\g,D,\omega,\eta,I)$ is a semi-Sasakian Lie algebra.
			\item[(ii)] $\left(\R^{>0}\ltimes_\theta G, \Omega=\omega+t^{-1}\alpha\wedge dt,2t^{-1}dt,I \right)$ is an lck Lie group.
			\item[(iii)] $(G,\theta, \hat\Omega=t^2\omega+tdt\wedge\eta,I)$ is a semi-Sasakian Lie group.

		\end{itemize}
	\end{cor}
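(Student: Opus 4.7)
The plan is to deduce this corollary directly from Theorem~1.5 by adjoining the complex structure $I$ to each of the three formulations and verifying that the K\"ahler positivity condition is preserved by the correspondences already established there. Since the underlying semi-contact, lcs, and semi-contact-Lie-group data on the respective spaces are equivalent by Theorem~1.5, it suffices to check that, once $I$ is fixed, the extra integrability and positivity conditions appearing in (i), (ii), (iii) are mutually equivalent.

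The first observation is that integrability of $I$ is a condition purely on the Lie algebra $\R\ltimes_D\g$, which is canonically the Lie algebra of $\R^{>0}\ltimes_\theta G$ via the parametrization $t=\exp s$. Hence ``$I$ is integrable'' is literally the same condition in all three items, and the only remaining task is to compare the three positivity requirements for the associated symmetric bilinear form $g$.

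For this I would use that the 2-forms in (ii) and (iii) are conformally related by the positive factor $t^2$: namely, $\hat\Omega=t^2\omega+t\,dt\wedge\eta$ and $\Omega=\omega+t^{-1}\eta\wedge dt$ satisfy $\hat\Omega=t^{2}\Omega$ up to the sign convention already present in the statement of Theorem~1.5. Since $t^{2}>0$ on $\R^{>0}\ltimes_\theta G$ and $I$ is the same tensor in both settings, positive-definiteness of $\Omega(\cdot,I\cdot)$ and of $\hat\Omega(\cdot,I\cdot)$ are equivalent. To compare with (i), restrict to the identity of $\R^{>0}\ltimes_\theta G$ (where $t=1$): the form $\Omega$ then becomes $\omega+\vartheta\wedge\eta$ on the Lie algebra $\R\ltimes_D\g$, which is precisely the form used to define the semi-Sasakian metric in (i). Left-invariance of the data in (ii) then promotes positivity at the identity to positivity everywhere.

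The proof is therefore essentially bookkeeping layered on top of Theorem~1.5; the only minor obstacle is to align the sign conventions and the factors of $t$ in the definitions so that the conformal relation $\hat\Omega=t^{2}\Omega$ holds cleanly, guaranteeing that the three positivity conditions really coincide. Once this alignment is verified, the three equivalences are immediate.
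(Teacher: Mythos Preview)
Your proposal is correct and matches the paper's approach: the paper states this result as a corollary with no explicit proof, treating it as immediate from Theorem~1.5 (Theorem~5.7 in the body) once one carries the complex structure $I$ along and observes that the positivity/symmetry of $\Omega(\cdot,I\cdot)$ and $\hat\Omega(\cdot,I\cdot)$ agree under the conformal scaling $\hat\Omega=t^{2}\Omega$. Your write-up simply makes explicit the bookkeeping that the paper leaves to the reader.
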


	A Lie algebra $\mathfrak{g}$ is called {\bfseries projective} if there is an invariant affine structure $\nabla$ on $\mathfrak{g}\times \R$  such that 
	$$
	\nabla_X E =\nabla_E X = X,
	$$
	where $X\in \mathfrak{g}$ and $E\in \R$. A Lie group is called {\bfseries projective} if the corresponding Lie algebra is projective.  Note that if $G$ is a projective Lie group then there is an invariant affine structure on $G\times \R^{>0}$. A {\bfseries projective Hessian Lie group} $(G,g_G)$ is a projective Lie group $G$ endowed with a left-invariant Riemannian metric $g_G$ such that $(G\times\R^{>0},\nabla,g=t^2g_G +dt^2)$ is a Hessian cone, where $t$ is a coordinate on $\R^{>0}$ and $\nabla$ the affine connection on $G\times \R^{>0}$ corresponding to the projective structure on $G$.

		\begin{theorem}
		Let $G$ be an $n$-dimensional simply connected projective Hessian Lie group and $\theta$ the linear part of the corresponding affine representation of $G \times \R^{>0}$. Then there exists a structure of a semi-Sasakian Lie group on  $G\ltimes_\theta \R^{n+1}$. Moreover, $G\ltimes_\theta \R^{n+1}\simeq 
		TG\times \R$. 
	    \end{theorem}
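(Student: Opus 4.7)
The plan is to apply Theorem 1.4 to the Hessian Lie group $G\times\R^{>0}$ and then reorganize the resulting Kähler data to extract a semi-Sasakian structure via Corollary 1.6. Concretely, by the projective Hessian hypothesis, $(G\times\R^{>0},\nabla,t^2g_G+dt^2)$ is a Hessian Lie group of dimension $n+1$. Theorem 1.4 then produces a left-invariant Kähler metric on $(G\times\R^{>0})\ltimes_\theta\R^{n+1}$, and Theorem 1.3 identifies this group with $T(G\times\R^{>0})\simeq TG\times T\R^{>0}\simeq TG\times\R\times\R^{>0}$.

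Because $G\times\R^{>0}$ is a direct product of Lie groups, $\theta$ restricts to commuting subactions $\theta|_G$ of $G$ and $\theta_t$ of $\R^{>0}$, the latter of the form $\exp(tD)$ for the derivation $D$ corresponding to the Euler field $t\partial_t$. This gives the reorganization
$$
(G\times\R^{>0})\ltimes_\theta\R^{n+1}\simeq\R^{>0}\ltimes_{\theta_t}(G\ltimes_{\theta|_G}\R^{n+1}),
$$
and, upon quotienting by the $\R^{>0}$-factor, the claimed isomorphism $G\ltimes_{\theta|_G}\R^{n+1}\simeq TG\times\R$.

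The crucial step is to show that the Kähler structure produced by the r-map satisfies the cone conditions built into the definitions of semi-contact and semi-Sasakian Lie groups. I expect the defining property $\nabla(t\partial_t)=\mathrm{Id}$ of the Hessian cone to propagate through the explicit r-map formula into the homothety law $\lambda_q^*\Omega=q^2\Omega$, with the Kähler form decomposing as $\Omega=t^2\omega+t\,dt\wedge\eta$ for an invariant $2$-form $\omega$ and $1$-form $\eta$ on $G\ltimes_{\theta|_G}\R^{n+1}$, and with Lee form $2t^{-1}dt$. Granting this, Corollary 1.6 applied to the lck Lie group $(\R^{>0}\ltimes_{\theta_t}(G\ltimes_{\theta|_G}\R^{n+1}),\Omega,2t^{-1}dt,I)$ immediately yields a semi-Sasakian Lie group structure on $G\ltimes_{\theta|_G}\R^{n+1}$, completing the proof.

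The main obstacle is precisely the verification of the homothety law: one must trace the condition $\nabla(t\partial_t)=\mathrm{Id}$ through the r-map construction carefully enough to see that it forces both the $q^2$-scaling of $\Omega$ under $\lambda_q$ and the splitting $\Omega=t^2\omega+t\,dt\wedge\eta$ in a way compatible with the semidirect-product decomposition above. This is the point where the \emph{projective} (as opposed to merely Hessian) hypothesis is used, and where the manifold-level Theorem 1.1 is paralleled at the level of invariant structures.
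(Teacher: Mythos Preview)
Your overall architecture matches the paper's: build the r-map K\"ahler structure on $(G\times\R^{>0})\ltimes_\theta\R^{n+1}$, rewrite this group as $\R^{>0}\ltimes(G\ltimes_\theta\R^{n+1})$ via the commuting actions (the paper isolates this as a separate lemma), and then check the $\R^{>0}$-homothety to land in the semi-Sasakian definition. But there is a genuine gap in your first step.

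You assert that $(G\times\R^{>0},\nabla,\,t^2g_G+dt^2)$ is a Hessian Lie group and invoke Theorem~1.4. It is not: the cone metric $t^2g_G+dt^2$ is only $G$-invariant, not $(G\times\R^{>0})$-invariant, since under $t\mapsto qt$ it scales by $q^2$. Hence Theorem~1.4 does not apply, and in fact its conclusion --- a \emph{fully} left-invariant K\"ahler metric on $(G\times\R^{>0})\ltimes_\theta\R^{n+1}$ --- is incompatible with the homothety law $\lambda_q^*\hat\Omega=q^2\hat\Omega$ you want later: a left-invariant form would satisfy $\lambda_q^*\hat\Omega=\hat\Omega$ instead. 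So ``apply Theorem~1.4, then check $q^2$-scaling'' is internally inconsistent as stated.

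What the paper does, and what you should do, is not to \emph{apply} Theorem~1.4 as a black box but to \emph{re-run its proof}: construct $g^T$ via the r-map formula and observe that, since $g$ is only $G$-invariant, the argument of Theorem~1.4 yields only $(G\ltimes_\theta\R^{n+1})$-invariance of $g^T$. The behaviour under the remaining $\R^{>0}$-factor is then computed separately, exactly as in Proposition~3.6, using the commutative square $\pi\circ\mu_q=\lambda_q\circ\pi$ together with $\lambda_q^*g=q^2g$; this is where the projective (cone) hypothesis enters and gives $\mu_q^*g^T=q^2g^T$. With that in hand the definition of a semi-Sasakian Lie group is satisfied directly (or, equivalently, via Corollary~1.6 as you suggest). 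The ``main obstacle'' you flag is therefore not an extra verification layered on top of Theorem~1.4, but a replacement for the part of Theorem~1.4 that fails.
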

	    
		Note that Theorem 1.7 is a homogeneous analogue of Theorem 1.1.

 In section 6, we explain why Theorem 1.4 and Theorem 1.5 generalize the known construction of homogeneous K\"ahler domains by a homogeneous convex regular domains. According to \cite{vinb}, an invariant Hessian metric can be constructed on any homogeneous convex regular domain. Moreover, a homogeneous K\"ahler domain can be associated with a homogeneous convex regular cone (see \cite{shima}). We can get the same construction applying Theorem 1.4 to a Lie group acting simply transitively on a homogeneous regular convex domain. Such group always exists and it is called a {\bfseries Lie group associated with} a homogeneous regular domain. Any Lie group associated with a homogeneous regular domain is both a Hessian Lie group and a projective Lie group. 

The groups $\text{U}(1)=\text{SO}(2)$ and $\text{SU}(2)$ belong to another kind of projective Hessian Lie groups. Using them, we can construct a semi-Sasakian structure on the Euclidean group $\text{E}(2)$ and the group of isometries of the complex plane $\mathbb{C}^2$. Any Sasakian group is obviously semi-Sasakian. The Sasakian groups of dimension $n \le 5$ are classified in \cite{5sasaki}. Using this classification, we verify that the semi-Sasakian group $\text{E}(2)$ does not admit a Sasakian structure.  

Any Lie group associated with a homogeneous regular domain admits both structures: Hessian and projective Hessian. The group $\text{SO}(2)$ admits both structures too. However, not any projective Hessian group is Hessian. The group $\text{SU}(2)$ is not Hessian just because the sphere $S^3$ does not admit any flat affine structure. However, the group $\text{SU}(2)$ admits an invariant projective structure, since there is an invariant affine structure on $\text{SU}(2)\times\R^{>0}$. Thus, the natural question arises: does the existence of $G$-invariant selfsimilar structure on $G\times\R^{>0}$ implies the existence of $G\times\R^{>0}$-invariant Hessian structure on $G\times \R^{>0}$? The answer is positive when $G$ is Lie group associated with a homogeneous regular domain or $\text{U}(1)$. We show that $\text{SU}(2)\times \R^{>0}$ does not admit an invariant Hessian structure.

	\section{Hessian and Kähler structures}
	\begin{defin}
		 A {\bfseries flat affine manifold} is a differentiable manifold equipped with a flat, torsion-free connection. Equivalently, it is a manifold equipped with an atlas such that all translation maps between charts are affine transformations (see \cite{FGH}).  
	\end{defin}

	\begin{defin}
		A Riemannianian metric $g$ on a flat affine manifold $(M,\nabla)$ is called to be a {\bfseries Hessian metric} if $g$ is locally expressed by a Hessian of a function
		$$
		g=\text{Hess} \varphi=\nabla d \varphi =\frac{\partial^2}{\partial x^i \partial x^j} dx^i dx^j,
		$$
		where $x^1,\ldots, x^n$  are flat local coordinates. A {\bfseries Hessian manifold} $(M,\nabla,g))$ is a flat affine manifold $(M,\nabla)$ endowed with a Hessian metric $g$. (see \cite{shima}). 
	\end{defin}
	
	Let $U$ be an open chart on a flat affine manifold $M$, functions $x^1,\ldots, x^n$ be affine coordinates on $U$, and $x^1,\ldots, x^n, y^1, \ldots, y^n$ be the corresponding coordinates on $TU$. Define the complex structure $I$ by $I(\frac{\partial}{\partial x^i})=\frac {\partial} {\partial y^i}$. Corresponding complex coordinates are given by $z^i=x^i+\sqrt {-1}y^i$. The complex structure $I$ does not depend on a choice of flat coordinates on $U$. Thus, in this way, we get a complex structure on the $TM$.
	
	Let $\pi : TM \to M$ be a natural projection. Consider a Riemannian metric $g$ given locally by
	$$
	f_{i,j} dx^idx^j.
	$$
	Define the Hermitian metric $g^T$ on $TM$ by
	$$
	\pi^* f_{i,j} dz^id\bar z^j.
	$$
	\begin{proposition}[\cite{shima}, \cite{AC}]
		Let $M$ be a flat affine manifold, $g$ and $g^T$ as above. Then the following conditions are equivalent:
			\begin{itemize}
				\item[(i)]$g$ is a Hessian metric.
		
		\item[(ii)] $g^T$ is a Kähler metric. 
	\end{itemize}
		Moreover, if
		$
		g=\text{Hess} \varphi
		$ 
		locally then $g^T$ is equal to a complex Hessian
		$$
		g=\text{Hess}_\mathbb{C} (4\pi^*\varphi).
		$$
	\end{proposition}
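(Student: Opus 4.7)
The plan is to work entirely in local flat affine coordinates and reduce both claims to a direct computation together with the standard potential characterization of Kähler metrics. Fix a chart $U\subset M$ with affine coordinates $x^1,\dots,x^n$, and the induced coordinates $x^i,y^i$ on $TU$, so that $z^j=x^j+\sqrt{-1}y^j$ are holomorphic. Writing $g=f_{ij}dx^idx^j$, the lifted Hermitian metric is $g^T=h_{jk}\,dz^jd\bar z^k$ with $h_{jk}=\pi^* f_{jk}$; crucially, $h_{jk}$ depends only on the $x$-variables.

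First I would prove the ``Moreover'' identity, which gives $(i)\Rightarrow(ii)$. Since $x^j=(z^j+\bar z^j)/2$, on any function pulled back from $M$ one has $\partial_{z^k}=\tfrac12\partial_{x^k}$ and $\partial_{\bar z^k}=\tfrac12\partial_{x^k}$. Applying this twice to $4\pi^*\varphi$ yields
\[
\frac{\partial^2 (4\pi^*\varphi)}{\partial z^j\partial\bar z^k}=4\cdot\tfrac14\,\pi^*\!\left(\frac{\partial^2\varphi}{\partial x^j\partial x^k}\right)=\pi^* f_{jk},
\]
so $\mathrm{Hess}_{\mathbb C}(4\pi^*\varphi)=g^T$ whenever $g=\mathrm{Hess}\,\varphi$. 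Because a Hermitian metric admitting a local $\partial\bar\partial$-potential is automatically Kähler, this establishes $(i)\Rightarrow(ii)$ as well as the final identity of the proposition.

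For $(ii)\Rightarrow(i)$ I would use the standard coordinate criterion: a Hermitian metric $h_{jk}dz^jd\bar z^k$ is Kähler iff the associated $(1,1)$-form is closed, which is equivalent to the symmetry $\partial_{z^l}h_{jk}=\partial_{z^j}h_{lk}$ for all $j,k,l$. Since $h_{jk}=\pi^*f_{jk}$ depends only on $x$, the chain rule $\partial_{z^l}=\tfrac12\partial_{x^l}$ (on such functions) converts this symmetry into
\[
\frac{\partial f_{jk}}{\partial x^l}=\frac{\partial f_{lk}}{\partial x^j}.
\]
This is precisely the integrability condition that guarantees, by the Poincaré lemma applied twice, the local existence of a potential $\varphi$ on $U$ with $f_{jk}=\partial^2\varphi/\partial x^j\partial x^k$. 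Thus $g$ is Hessian.

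The only real subtlety is the bookkeeping of the factors of $\tfrac12$ when converting $\partial_{z^k}$ into $\partial_{x^k}$ for functions pulled back from the base, which is what produces the constant $4$ in the potential identity; the Kähler-to-Hessian direction then rests on the well-known closedness criterion for the fundamental form in local holomorphic coordinates and on Poincaré's lemma, both of which are standard.
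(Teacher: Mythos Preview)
The paper does not actually prove this proposition; it is stated with citations to Shima and Alekseevsky--Cort\'es and no proof environment follows. Your argument is the standard one found in those references: the chain-rule identity $\partial_{z^j}=\tfrac12\partial_{x^j}$ on pulled-back functions yields the potential formula and hence $(i)\Rightarrow(ii)$, while for $(ii)\Rightarrow(i)$ the closedness of the fundamental form gives the total symmetry $\partial_l f_{jk}=\partial_j f_{lk}$, and two applications of the Poincar\'e lemma (using the symmetry $f_{jk}=f_{kj}$ of the metric to make the intermediate $1$-form $\psi_k\,dx^k$ closed) produce the local potential. The proof is correct and complete as written.
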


	\begin{defin}
		The metric $g^T$ is called {\bfseries Kähler metric associated with  $g$}. The correspondence that associates the Kähler manifold $(TM,g^T)$ with a Hessian manifold $(M, g)$ is called {\bfseries r-map} (see \cite{AC}).
	\end{defin}
	
	\begin{proposition}
		Let $(M,g)$ be a Hessian manifold, $I$ the corresponding complex structure on $TM$, and $\pi : TM \to M$ a projection. Then the associated Kähler metric $g^T$ equals
		\begin{equation}
		h(X,Y)=\pi^*g(X,Y)+\pi^*g(IX,IY)+\sqrt{-1}\pi^*g(IX,Y)-\sqrt{-1}\pi^*g(X,IY).
		\end{equation}
	\end{proposition}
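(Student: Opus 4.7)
The plan is a direct computation in local flat coordinates. Choose affine coordinates $x^1,\ldots,x^n$ on an open set $U\subset M$ so that $g|_U=f_{ij}\,dx^i\,dx^j$, and let $(x^i,y^j)$ be the induced coordinates on $\pi^{-1}(U)\subset TM$. Set $z^i=x^i+\sqrt{-1}\,y^i$. In this basis one has $I\partial_{x^i}=\partial_{y^i}$ and $I\partial_{y^i}=-\partial_{x^i}$, and the pullback $\pi^* g=(\pi^* f_{ij})\,dx^i\,dx^j$ evaluates to $f_{kl}$ on $(\partial_{x^k},\partial_{x^l})$ and to $0$ on any pair of basis vectors containing some $\partial_{y^l}$.

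First I would expand $h(X,Y)$ on each of the four pair types $(\partial_{x^k},\partial_{x^l})$, $(\partial_{y^k},\partial_{y^l})$, $(\partial_{x^k},\partial_{y^l})$, $(\partial_{y^k},\partial_{x^l})$. For each pair, substituting $I\partial_{x^i}=\partial_{y^i}$ and $I\partial_{y^i}=-\partial_{x^i}$ into the four summands of $h$ and using the vanishing property of $\pi^* g$ reduces each summand to $0$ or $\pm f_{kl}$ (possibly with a factor of $\sqrt{-1}$). Combining them yields $h(\partial_{x^k},\partial_{x^l})=h(\partial_{y^k},\partial_{y^l})=f_{kl}$ and $h(\partial_{x^k},\partial_{y^l})=-h(\partial_{y^k},\partial_{x^l})=\sqrt{-1}\,f_{kl}$.

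Next I would evaluate $g^T=(\pi^* f_{ij})\,dz^i\,d\bar z^j$ on the same basis using $dz^i=dx^i+\sqrt{-1}\,dy^i$ and $d\bar z^j=dx^j-\sqrt{-1}\,dy^j$, producing exactly the four values above. Since both $g^T$ and $h$ are tensor fields agreeing on a local frame, they coincide on all of $TM$, which is what is asserted.

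The only real difficulty is bookkeeping: one must keep the sign in $I\partial_{y^i}=-\partial_{x^i}$ (and correspondingly the sign in $d\bar z^j$) consistent throughout, and adopt the convention under which the formal expression $f_{ij}\,dz^i\,d\bar z^j$ is read as a complex-valued sesquilinear pairing on real tangent vectors (conjugate-linear in the first slot). Once this is pinned down the identity falls out by inspection, and in particular no use of the Hessian condition on $g$ is needed here—the formula holds for any metric $g=f_{ij}\,dx^i\,dx^j$ on the affine manifold $M$.
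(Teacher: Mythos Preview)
Your proposal is correct and follows essentially the same route as the paper: write $g=f_{ij}\,dx^i\,dx^j$ in affine coordinates, expand $g^T=\pi^*f_{ij}\,dz^i\,d\bar z^j$ via $z^i=x^i+\sqrt{-1}\,y^i$, and verify $h=g^T$ on coordinate basis pairs. You are somewhat more thorough than the paper, which checks only the pair $(\partial_{x^i},\partial_{x^j})$ explicitly and declares the remaining cases ``similar'', whereas you record all four values and flag the sesquilinearity convention needed to make the mixed pairs match; but the underlying argument is identical.
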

	\begin{proof}
		If 
		\begin{equation}
		g=f_{i,j} dx^i dx^j
		\end{equation}
		then

		$$
		g^T=\pi^* f_{i,j} dz^i d\bar z^j=\pi^* f_{i,j} d(x^i+\sqrt{-1}y^i) d(x^j-\sqrt{-1}y^j)=
		$$
		\begin{equation}
		=\pi^* f_{i,j} dx^i dx^j+\pi^* f_{i,j} dy^idy^j+\sqrt{-1}\pi^* f_{i,j} dy^idx^j-\sqrt{-1}\pi^* f_{i,j} dx^i dy^j.
		\end{equation}
		It is enough to check the identity 
		$$
		h(X,Y)=g^T(X,Y)
		$$
		on pairs of basis vectors. For any $i\in \{1,\ldots,n\}$ we have
		$$
		\pi^*g\left(\frac{\partial}{\partial y^i},\ldots \right)=0,
		$$
		moreover,
		$$
		I \frac{\partial}{\partial x^i}=\frac{\partial}{\partial y^i}.
		$$
		Hence, by (2.1),
		$$
		h\left(\frac{\partial}{\partial x^i},\frac{\partial}{\partial x^j}\right)=\pi^* g\left(\frac{\partial}{\partial x^i},\frac{\partial}{\partial x^j}\right)
		$$
		and, by (2.2),
		$$
		\pi^* g\left(\frac{\partial}{\partial x^i},\frac{\partial}{\partial x^j}\right)=f_{i,j}.
		$$
		On the other hand, by (2.3),
		$$
		g^T\left(\frac{\partial}{\partial x^i},\frac{\partial}{\partial x^j}\right)=f_{i,j}.
		$$
		Thus, we get
		$$
		g^T\left(\frac{\partial}{\partial x^i},\frac{\partial}{\partial x^j}\right)=h\left(\frac{\partial}{\partial x^i},\frac{\partial}{\partial x^j}\right).
		$$ 
		Checking for the pairs $\left(\frac{\partial}{\partial x^i},\frac{\partial}{\partial y^j}\right)$ and $\left(\frac{\partial}{\partial y^i},\frac{\partial}{\partial y^j}\right)$ is similar.
	\end{proof}

	\section{Projective Hessian and Sasakian manifolds}
	
	\begin{defin}
	A {\bfseries radiant manifold} $(C,\nabla, \xi)$ is a flat affine manifold $(C,\nabla)$ endowed with a vector field $\xi$ satisfying
	\begin{equation}
	\nabla \xi =\text{Id}.
	\end{equation}
	
	Equivalently, it is a manifold equipped with an atlas such that all translation maps between charts are linear transformations (see e.g. \cite{Go}).
	\end{defin}

	\begin{proposition}[\cite{Go}]
		Let $t$ be a coordinate on $\R^{>0}$ and $(M\times \R^{>0},\nabla,t\dt)$ a radiant manifold. Consider a natural action of $\R^{>0}$ on $M\times \R^{>0}$. Then the connection $\nabla$ is $\R^{>0}\text{-invariant}$.
	\end{proposition}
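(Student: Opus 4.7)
The goal is to show that the natural $\R^{>0}$-action $\lambda_q(m,t)=(m,qt)$ preserves the flat torsion-free connection $\nabla$. The plan is to identify this action with the flow of the radial vector field $\xi=t\dt$ and then show that this flow is by affine automorphisms of $\nabla$, hence preserves the connection.

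First I would observe that the flow of $\xi=t\dt$ at time $s$ sends $(m,t)$ to $(m,e^s t)$, so it coincides with $\lambda_{e^s}$. Therefore it suffices to show that the flow of $\xi$ acts by affine transformations of $(M\times\R^{>0},\nabla)$, equivalently that $\mathcal{L}_\xi \nabla = 0$.

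Next I would pass to local radiant coordinates $x^1,\dots,x^{n+1}$ on $M\times\R^{>0}$ (which exist because $(M\times\R^{>0},\nabla)$ is flat and torsion-free, and in fact such coordinates are adapted to the condition $\nabla\xi=\mathrm{Id}$ via Definition 3.1). In any affine chart, the Christoffel symbols $\Gamma^k_{ij}$ vanish identically, so the equation $\nabla\xi=\mathrm{Id}$ reduces to $\partial_i\xi^j=\delta^j_i$. Integrating gives $\xi=(x^j+c^j)\,\d_j$ for some constants $c^j$, hence the time-$s$ flow $\varphi_s$ of $\xi$ is the affine map $x^j\mapsto e^s x^j+(e^s-1)c^j$. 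Because $\varphi_s$ is affine in coordinates where $\Gamma^k_{ij}=0$, its pullback connection again has vanishing Christoffel symbols, so $\varphi_s^*\nabla=\nabla$ on the chart.

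Finally, since preserving a connection is a local condition and the charts cover $M\times\R^{>0}$, the global flow $\lambda_q$ of $\xi$ satisfies $\lambda_q^*\nabla=\nabla$ for all $q\in\R^{>0}$, which is the desired invariance. I do not expect any real obstacle; the only point that deserves care is the choice of radiant chart and the verification that the expression of $\xi$ in such a chart really is affine-linear in the coordinates, which follows at once from $\nabla\xi=\mathrm{Id}$ plus flatness.
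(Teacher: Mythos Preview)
Your argument is correct: in affine coordinates the condition $\nabla\xi=\mathrm{Id}$ forces $\xi=(x^j+c^j)\partial_j$, whose flow is affine and therefore preserves $\nabla$; passing from the local statement to the global one via $\mathcal{L}_\xi\nabla=0$ and completeness of the flow $\lambda_{e^s}$ is routine. Note, however, that the paper does not supply its own proof of this proposition---it is quoted from \cite{Go}---so there is nothing in the text to compare against; your proof is essentially the standard argument one finds in Goldman's notes, and the only spot worth tightening is the final patching step, which is cleanest phrased as ``$\mathcal{L}_\xi\nabla=0$ everywhere, hence the complete flow preserves $\nabla$'' rather than as a covering-by-charts statement.
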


			\begin{defin}
			A {\bfseries Hessian cone} is a Hessian manifold $(M\times\R^{>0},\nabla,g=t^2g_M+dt^2)$ such that $(M\times \R^{>0},\nabla,t\dt)$ is a radiant manifold, where $t$ is a coordinate on $\R^{>0}$.
			We say that a Riemannian manifold $(M,g_M)$ is a {\bfseries projective Hessian manifold} if there exists a connection $\nabla$ on $M\times \R^{>0}$ such that $(M\times\R^{>0},\nabla,g=t^2g_M+dt^2)$ is a Hessian cone.
		\end{defin}

	\begin{defin}
		A {\bfseries Sasakian manifold} is a Riemannian manifold $(M,g_M)$ such that the cone metric $t^2 g_M+ dt^2$ on $M\times \R^{>0}$ is Kähler with respect to a dilation invariant complex structure.
	\end{defin}
	
	\begin{proposition}[\cite{OV}]
		Let $(M\times \R^{>0}, g, I)$ be a Kähler manifold. For any $q\in \R^{>0}$ consider a map 
		$
		\lambda_q : M\times \R^{>0} \to M\times \R^{>0}
		$
		defined by
		$
		\lambda_q (m\times t)=m\times qt.
		$ 
		If $\lambda_q^* g=q^2 g$ then $g=t^2g_M+dt^2$ and $(M,g_M)$ is a Sasakian manifold. 
	\end{proposition}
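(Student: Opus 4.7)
The plan is to translate the finite dilation hypothesis into an infinitesimal homothety, read off the block form of $g$ forced by the scaling, and then deploy the Kähler condition to rigidify this into the cone form.

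Setting $\xi = t\,\partial/\partial t$, the one-parameter family $\{\lambda_q\}$ is the flow of $\xi$ at time $\log q$, so differentiating $\lambda_q^* g = q^2 g$ at $q=1$ yields $\mathcal{L}_\xi g = 2g$. Splitting $T(M\times \R^{>0}) = \pi^*TM \oplus \langle\partial_t\rangle$ and writing $g = g_{MM} + 2 g_{Mt}\,dt + g_{tt}\,dt^2$, the scaling applied block by block (using $(\lambda_q)_*\partial_t = q\,\partial_t$ and $(\lambda_q)_*|_{\pi^*TM} = \text{Id}$) gives $g_{MM}(m,qt) = q^2 g_{MM}(m,t)$, $g_{Mt}(m,qt) = q\, g_{Mt}(m,t)$, $g_{tt}(m,qt) = g_{tt}(m,t)$. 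Evaluating at $t=1$ produces
$$
g = t^2 g_M(m) + 2t\,\alpha(m)\,dt + f(m)\,dt^2
$$
for tensors $g_M, \alpha, f$ on $M$.

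The second step uses the Kähler hypothesis to pin down the remaining data. From the block form, $\xi^\flat = t^2 \alpha + t f\,dt$, so a direct exterior derivative gives
$$
d\xi^\flat = t^2\,d_M\alpha + t\,(d_M f - 2\alpha)\wedge dt.
$$
I aim to establish $d\xi^\flat = 0$, which is equivalent to $\alpha = \tfrac12 d_M f$; under this equivalence the radial reparametrization $t' = \sqrt{f(m)}\,t$ (which intertwines $\lambda_q$ with $t' \mapsto q t'$) brings $g$ into the cone form $(t')^2 g_M' + (dt')^2$ on $M\times\R^{>0}$. For the vanishing, decompose the endomorphism $X \mapsto \nabla_X \xi$ into symmetric and antisymmetric parts: $\mathcal{L}_\xi g = 2g$ forces the symmetric part to be $\text{Id}$, while the antisymmetric part equals $\tfrac12(d\xi^\flat)^\sharp$. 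Combining $\nabla I = 0$, $d\omega = 0$, and the dilation-invariance $\mathcal{L}_\xi I = 0$ (which is built into the Sasakian conclusion and on a Kähler cone follows from the scaling of $g$), Cartan's formula yields $\omega = \tfrac12\, d (I\xi)^\flat$. Feeding this into the Koszul formula for $\nabla_X \xi$ on pairs involving $\xi$ and $I\xi$ and using $I^2 = -\text{Id}$ forces the antisymmetric part of $\nabla\xi$ to vanish.

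Relabelling $t'$ back as $t$, the metric is in cone form $g = t^2 g_M + dt^2$, and since $I$ is a dilation-invariant Kähler complex structure on this cone, $(M, g_M)$ is Sasakian by Definition 3.4. The main obstacle is establishing $d\xi^\flat = 0$: the Riemannian scaling $\mathcal{L}_\xi g = 2g$ alone does not rule out the cross term or a non-constant normal coefficient, and the argument essentially uses the whole Kähler package — parallel $I$, closed $\omega$, and dilation-invariant $I$ — to kill the antisymmetric part of $\nabla\xi$. Once this is in hand, the remainder is a routine change of radial coordinate.
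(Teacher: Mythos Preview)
The paper gives no proof of this proposition; it is quoted from Ornea--Verbitsky \cite{OV}, so there is nothing in the paper against which to compare your attempt.

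Assessing your argument on its own: the crucial step, where you claim the antisymmetric part of $\nabla\xi$ vanishes, does not go through. You invoke $\mathcal{L}_\xi I = 0$ while conceding it is ``built into the Sasakian conclusion,'' which is circular, and your parenthetical that it ``follows from the scaling of $g$'' on a K\"ahler cone presupposes the very cone form you are trying to establish. Worse, even \emph{granting} $\mathcal{L}_\xi I = 0$ the conclusion $d\xi^\flat = 0$ can fail. On $S^1\times\R^{>0}$ take
\[
g \;=\; t^2\,d\theta^2 + 2ct\,d\theta\,dt + dt^2, \qquad 0<|c|<1.
\]
This metric is positive definite and K\"ahler (every oriented Riemannian surface is), satisfies $\lambda_q^* g = q^2 g$, and a direct check shows the compatible $I$ obeys $\mathcal{L}_\xi I = 0$; yet $\xi^\flat = ct^2\,d\theta + t\,dt$ gives $d\xi^\flat = 2ct\,dt\wedge d\theta \neq 0$, and $g$ is visibly not of the form $t^2 g_M + dt^2$ in the given product coordinates. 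One can bring it to cone form via $\theta' = \theta + c\log t$, $t' = t\sqrt{1-c^2}$, but then the radial dilation is no longer the original $\lambda_q$. So your Koszul argument cannot force the cross term to vanish, and the literal assertion $g = t^2 g_M + dt^2$ with the \emph{given} $t$ does not follow from the stated hypotheses. The formulation in \cite{OV} is slightly different from the paraphrase here: one typically takes the cone coordinate to be $|\xi|$ and allows the product decomposition to change, or adds the hypothesis $\nabla\xi = \mathrm{Id}$ (equivalently, that $\tfrac12|\xi|^2$ is a K\"ahler potential) rather than merely $\mathcal{L}_\xi g = 2g$.
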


	There exists a decomposition	
	$$
	T(M\times\R^{>0})=TM \times T\R^{>0}=TM\times \R^{>0} \times \R.
	$$
	If $M\times\R^{>0}$ possess a Hessian structure then, by Proposition 2.3, $T(M\times\R^{>0})$ admits a Kähler structure.
	
	\begin{proposition}
		Let $(M\times \R^{>0}, \nabla, g)$ be a Hessian cone and $g^T$ the associated Kähler metric on $T(M\times \R^{>0})$ . Consider $T(M\times\R^{>0})=TM\times \R \times \R^{>0}$ as a cone over $TM\times \R$. Then for any $q\in\R^{>0}$ we have $\mu_q^* g=q^2 g$, where the map 
		$
		\mu_q : TM\times\R\times \R^{>0} \to TM\times \R\times \R^{>0}
		$
		is defined by
		$
		\mu_q (m\times s\times t)=m\times s\times  qt.
		$  
	\end{proposition}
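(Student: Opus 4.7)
The plan is to work in local flat affine coordinates on $M\times\R^{>0}$ adapted to the radiant structure, check that the Hessian metric's coefficients are homogeneous of degree zero under the cone dilation, deduce that the associated K\"ahler metric $g^T$ transforms with weight $q^2$ under the tangent lift of $\lambda_q$, and finally identify this tangent lift with $\mu_q$ using the canonical radial trivialization of $T\R^{>0}$.

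First, near any point I would pick local flat affine coordinates $(x^1,\ldots,x^n)$ on $M\times\R^{>0}$ in which the radial vector field $\xi=t\partial_t$ takes the form $\xi=x^i\partial_{x^i}$. This is possible because $\nabla\xi=\mathrm{Id}$ in flat coordinates forces $\partial_{x^j}\xi^i=\delta^i_j$, so $\xi^i=x^i+c^i$, and an affine translation absorbs the constants. In these coordinates the flow $\lambda_q(m\times t)=m\times qt$ becomes $x\mapsto qx$. Writing the Hessian metric as $g=f_{ij}(x)\,dx^idx^j$, the cone identity $\lambda_q^* g=q^2 g$ combined with $\lambda_q^*(dx^idx^j)=q^2\,dx^idx^j$ forces $f_{ij}(qx)=f_{ij}(x)$, so the components $f_{ij}$ are homogeneous of degree zero.

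Next, on $T(M\times\R^{>0})$ the induced flat coordinates are $(x^i,y^i)$, and Proposition 2.3 gives
\[
 g^T=\pi^* f_{ij}\bigl(dx^idx^j+dy^idy^j\bigr)+\sqrt{-1}\,\pi^* f_{ij}\bigl(dy^idx^j-dx^idy^j\bigr).
\]
The tangent lift $d\lambda_q$ sends $(x^i,y^i)\mapsto(qx^i,qy^i)$, so each $dx$ and $dy$ picks up a factor of $q$ while the coefficients $f_{ij}(qx)=f_{ij}(x)$ are unchanged. Therefore $d\lambda_q^* g^T=q^2 g^T$.

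It remains to identify $d\lambda_q$ with $\mu_q$. I would trivialize $T\R^{>0}$ by the radial frame $t\partial_t$, writing each $w\in T_t\R^{>0}$ uniquely as $w=s\cdot(t\partial_t)|_t$; this gives a coordinate $s\in\R$ in place of the naive fiber coordinate attached to $\partial_t$. Because $\lambda_q$ preserves the vector field $t\partial_t$, in these coordinates $d\lambda_q(m,s,t)=(m,s,qt)$, which is precisely $\mu_q$, so $\mu_q^* g^T=q^2 g^T$. The main obstacle is this last identification: the decomposition $T(M\times\R^{>0})=TM\times\R\times\R^{>0}$ has to be read via the radial trivialization rather than the naive one attached to $\partial_t$, so that the cone dilation acts only on the $\R^{>0}$-factor.
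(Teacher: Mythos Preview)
Your proof is correct and rests on the same underlying idea as the paper: since $g^T$ is assembled from $\pi^*g$ and the cone metric satisfies $\lambda_q^*g=q^2g$, the appropriate lift of $\lambda_q$ scales $g^T$ by $q^2$. The paper argues this abstractly via the commutative square $\pi\circ\mu_q=\lambda_q\circ\pi$ together with the coordinate-free formula $g^T(X,Y)=\pi^*g(X,Y)+\pi^*g(IX,IY)+\cdots$ from Proposition~2.5, pulling back through the diagram in one stroke (implicitly using that the complex structure is invariant under the lift). You instead pass to radiant affine coordinates where $\lambda_q$ becomes the linear map $x\mapsto qx$, verify $f_{ij}(qx)=f_{ij}(x)$, and check the scaling of $g^T$ directly in the induced $(x^i,y^i)$ coordinates. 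Your final paragraph, identifying $\mu_q$ with the tangent lift $d\lambda_q$ via the radial trivialization $w=s\cdot(t\partial_t)$ of $T\R^{>0}$, makes precise a point the paper leaves implicit; this is the main added value of your more hands-on route, at the cost of being tied to a local coordinate chart.
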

	
	\begin{proof}
		We have the commutative diagram 
		$$
		\begin{CD}
		T(M\times\R^{>0}) @>\mu_q>> T(M\times\R^{>0})\\
		@VV\pi V @VV\pi V  @.\\
		M\times\R^{>0} @>\lambda_q>> M\times\R^{>0}
		\end{CD},
		$$
		where $\mu_q$ and $\lambda_q$ are multiplications of the coordinate on $\R^{>0}$ by $q$. By Proposition 2.5, we have 
		\begin{equation}
		g^T(X,Y)=\pi^*g(X,Y)+\pi^*g(IX,IY)+\sqrt{-1}\pi^*g(IX,Y)-\sqrt{-1}\pi^*g(X,IY).
		\end{equation}	
		Since the diagram is commutative, it follows that \begin{equation}
		\mu_q^*\pi^*=\pi^*\lambda_q^*.
		\end{equation} 
		Moreover, $g$ is a cone metric. Hence,
		\begin{equation}
		\lambda_q^* g= q^2 g.
		\end{equation}
		It follows from (3.2), (3.3), and (3.4) that 
		$$
		\mu_q^* g^T(X,Y)=q^2 g^T(X,Y).
		$$  
	\end{proof}

	\begin{theorem}
		Let $(M, g_M)$ be a projective Hessian manifold. Then $TM\times\R$ admits a structure of a Sasakian manifold.
	\end{theorem}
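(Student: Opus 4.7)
The plan is to apply the $r$-map of Proposition 2.3 to the Hessian cone provided by the projective Hessian hypothesis, and then to recognize the resulting Kähler manifold as a Kähler cone over $TM\times\R$. The Sasakian structure on $TM\times\R$ will then follow immediately from Proposition 3.5.

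Concretely, the hypothesis furnishes a Hessian cone $(M\times\R^{>0},\nabla,g=t^2g_M+dt^2)$, and the r-map yields a Kähler manifold $(T(M\times\R^{>0}),I,g^T)$. Using the canonical identification
$$
T(M\times\R^{>0})=TM\times T\R^{>0}=TM\times\R\times\R^{>0},
$$
with the last factor carrying the coordinate $t$ and the middle $\R$ playing the role of the fiber coordinate on $T\R^{>0}$, the dilation $\mu_q$ from Proposition 3.6 acts only on $t$, and Proposition 3.6 already provides $\mu_q^*g^T=q^2g^T$. Hence $g^T$ is of cone type over $TM\times\R$.

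The step I expect to be the main technical point is checking that the complex structure $I$ on $T(M\times\R^{>0})$ is $\mu_q$-invariant, which is what the definition of Sasakian (Definition 3.4) requires. For this, Proposition 3.2 says that the connection $\nabla$ on the Hessian cone is $\R^{>0}$-invariant, so $\lambda_q:M\times\R^{>0}\to M\times\R^{>0}$ is an affine transformation with respect to the flat structure. Since $I$ is built from flat coordinates $w^i$ on $M\times\R^{>0}$ via $z^i=w^i+\sqrt{-1}v^i$ on the tangent bundle, an affine map of the base lifts to a map of the form $z\mapsto Az+b$ on the tangent bundle in these complex coordinates; such a map is holomorphic. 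Applied to $\mu_q=T\lambda_q$, this gives $\mu_q^*I=I$.

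Combining the two pieces, $(T(M\times\R^{>0}),I,g^T)$ is a Kähler manifold on which $\mu_q$ acts as a holomorphic cone dilation. Applying Proposition 3.5 to this Kähler cone identifies its base as a Sasakian manifold, and by the decomposition above this base is precisely $TM\times\R$. The bookkeeping obstacle is purely in making sure the cone-over-cone identification is consistent, namely that the slice transverse to $\mu_q$ is exactly $TM\times\R$ (and not some other transversal), which follows directly from the explicit form of $\mu_q$.
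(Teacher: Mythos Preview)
Your proof is correct and follows essentially the same route as the paper's: apply the r-map to the Hessian cone, invoke Proposition~3.2 to obtain $\R^{>0}$-invariance of $\nabla$ and hence of $I$, use Proposition~3.6 for the metric scaling, and conclude via Proposition~3.5. Your explicit identification of the dilation with the tangent lift $T\lambda_q$ and the accompanying holomorphicity argument simply spell out what the paper compresses into the single clause ``the constructed by $\nabla$ complex structure $I$ is $\R^{>0}$-invariant.''
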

	\begin{proof}

	Define a K\"ahler structure $(g^T,I)$ on $T(M\times\R^{>0})=TM\times\R\times\R^{>0}$ as above. 	By proposition 3.2, the connection $\nabla$ is $\R^{>0}$ invariant. Therefore, the constructed by $\nabla$ complex structure $I$ is $\R^{>0}$-invariant.  Then, by Proposition 3.4 and Proposition 3.5, $TM\times \R$ admits a structure of a Sasakian manifold. 
	\end{proof}

	\section{Affine representations and flat torsion free connections on Lie groups}
		
		The group of affine transformations $\text{Aff} (\mathbb{R}^n)$ is given by the matriсes of the form
		$$
		\begin{pmatrix}
		A & a\\
		0 & 1
		\end{pmatrix}
		\in \text{GL}(\R^{n+1}),
		$$
		where $A\in \text{GL}(\R^n)$, and $a\in \R^n$ is a column vector. The corresponding Lie algebra $\mathfrak{aff}(\R^n)$ is given by matrices of the form
		$$
		\begin{pmatrix}
		A & a\\
		0 & 0
		\end{pmatrix}
		\in \mathfrak{gl}(\R^{n+1}).
		$$
		The commutator of $\mathfrak{aff}(\R^n)$ is equal to
		$$
		\left[
		\begin{pmatrix}
		A & a\\
		0 & 0
		\end{pmatrix},
		\begin{pmatrix}
		B & b\\
		0 & 0
		\end{pmatrix}
		\right]
		=
		\begin{pmatrix}
		[A,B] & A(b)-B(a)\\
		0 & 0
		\end{pmatrix}.
		$$
		Algebra $\mathfrak{aff} (\R^n)$ is the semidirect product $\mathfrak{gl}(\R^n)\ltimes \R^n$, where the commutator is given by
		$$
		[A\ltimes a,B \ltimes b]=[A,B]\ltimes (Ab-Ba).
		$$
		Group $\text{Aff}(\R^n)$ is the semidirect product $\text{GL}(\R^n)\ltimes \R^n$, where multiplication is given by 
		$$
		(A\ltimes a) (B \ltimes B)=AB\ltimes (a+Ab).
		$$

		\begin{defin}
			An affine representation is called {\slshape étale} if there exists a point $x \in \R^n$ such that the orbit of $x$ is open and the stabilizer of $x$ is discrete. 
		\end{defin}
		
		\begin{theorem}[\cite{burde} or \cite{Bu2}]
			Let $G$ be a Lie group. There is a correspondence between left invariant torsion-free flat connections and étale affine representations. Moreover, if the connection is complete then the corresponding étale affine representation acts simply transitive on $\R^n$.

		\end{theorem}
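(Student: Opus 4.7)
The plan is to construct an explicit bijection passing through left symmetric algebra structures on $\mathfrak{g}$. The paper has already noted the equivalence between torsion-free flat connections and left symmetric algebra (LSA) structures, so I would organize the proof as: left invariant flat torsion-free connections on $G$ $\leftrightarrow$ LSA structures on $\mathfrak{g}$ $\leftrightarrow$ étale affine representations $\eta : \mathfrak{g} \to \mathfrak{aff}(\R^n)$.

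For the first correspondence, a left invariant connection is determined by its values on left invariant vector fields, and setting $X\cdot Y := \nabla_X Y$ for $X,Y\in\mathfrak{g}$ produces a bilinear multiplication on $\mathfrak{g}$. Torsion freeness unwinds to $XY-YX=[X,Y]$ and vanishing of the curvature tensor on left invariant fields unwinds to symmetry of the associator $X(YZ)-(XY)Z = Y(XZ)-(YX)Z$, which are precisely the LSA axioms stated in the introduction. Conversely, any such multiplication extends by left invariance to a left invariant connection, and these operations are mutually inverse.

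For the second correspondence, given an LSA on $\mathfrak{g}$, define $\eta : \mathfrak{g}\to\mathfrak{aff}(\mathfrak{g}) = \mathfrak{gl}(\mathfrak{g})\ltimes\mathfrak{g}$ by $\eta(X)=L_X \ltimes X$, where $L_X$ is left multiplication. Using the bracket formula in $\mathfrak{aff}(\mathfrak{g})$ recalled at the beginning of the section, I would check that $\eta$ is a Lie algebra homomorphism: the translational component $L_X Y - L_Y X$ equals $[X,Y]$ by the torsion-free axiom, and the linear component requires $[L_X,L_Y]=L_{[X,Y]}$, which when applied to an element $Z$ is exactly the associator identity. The differential of the orbit map at $0\in\mathfrak{g}$ is $X\mapsto X$, hence is an isomorphism, so the orbit is open and the stabilizer of $0$ is discrete, meaning $\eta$ is étale. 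Conversely, given an étale representation $\eta$ with open orbit at $p$, I would translate coordinates so $p=0$, use the étale hypothesis to identify the translational part with a linear isomorphism $\mathfrak{g}\xrightarrow{\sim}\R^n$, and recover an LSA by $XY:=L_X(Y)$, running the computations in reverse.

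For the completeness addendum, the main point is the developing map $\mathrm{dev}:\tilde G\to\R^n$ associated to the affine structure, which is the $\eta$-equivariant map obtained by analytically continuing affine charts along geodesics. Completeness of $\nabla$ is equivalent to $\mathrm{dev}$ being defined on all of $\tilde G$ and, since the exponential in affine charts is surjective, to $\mathrm{dev}$ being a global diffeomorphism; but $\mathrm{dev}$ intertwines the left action of $\tilde G$ with the affine action via $\eta$, so this is equivalent to $\tilde G$ acting simply transitively on $\R^n$. I expect this last step to be the main obstacle: the bijection between connections and representations is essentially algebraic, but transferring geodesic completeness into simple transitivity requires careful analysis of the developing map and a bootstrap from the étale (local) property to the global conclusion, using the fact that a local diffeomorphism which is also a proper covering of $\R^n$ must be a homeomorphism.
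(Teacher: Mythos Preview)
Your construction is correct and coincides with the paper's: the formula $\eta(X)=L_X\ltimes X$, written in matrix form as $\begin{pmatrix}\nabla_X & X\\ 0 & 0\end{pmatrix}$, is exactly what the paper records, and the paper then simply defers all verifications (including the completeness statement) to the cited references \cite{burde}, \cite{Bu2}. One small imprecision in your completeness argument: the developing map is \emph{always} globally defined on $\tilde G$, irrespective of completeness; what completeness actually buys is that $\mathrm{dev}$ is a covering map onto $\R^n$, hence (by simple connectedness of $\R^n$) a diffeomorphism, from which simple transitivity follows by equivariance.
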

		
		\begin{proof}
			
			Choosing a basis, identify $\mathfrak{g}$ with $\R^n$. Then consider $\nabla_X$ as a linear endomorphism of $\R^n$ for any $X\in \mathfrak{g}$. The corresponding to $\nabla$ étale  representation is given by 
			$$
			\eta : \mathfrak{g} \to \mathfrak{aff} (\R^n),
			$$
			
			\begin{equation}		
			\eta(X)  = 
			\begin{pmatrix}
			\nabla_X & X\\
			0 & 0
			\end{pmatrix}
			\in \mathfrak{aff}(\R^n)\subset \mathfrak{gl}(\R^{n+1})		
			\end{equation}
			(see \cite{burde} or \cite{Bu2} for details).
		\end{proof} 
		
		For any $X \in \mathfrak{g}$ we can consider $\nabla_X$ on the Lie algebras $\mathfrak{g}$ as a linear automorphism of the vector space $\mathfrak{g}$. Thus, the linear automorphism 
		$$
		\exp \nabla_X=\text{id} +\frac{\nabla_X}{1!}+\frac{\nabla_X\nabla_X}{2!}+\frac{\nabla_X\nabla_X\nabla_X}{3!}+\ldots 
		$$
		is well defined.
		\begin{proposition}
			Let $\nabla$ be a left invariant flat torsion-free connection on a simply connected Lie group $G$ and $\tau$ the corresponding étale  affine representation of $G$. If $X \in \mathfrak{g}$ then the linear part of $\tau(\exp X)$ is equal to $\exp \nabla_X$.
		\end{proposition}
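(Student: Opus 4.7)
The plan is to reduce everything to a single matrix exponential computation inside $\mathrm{GL}(\R^{n+1})$. First, I would observe that the formula (4.1) defining $\eta$ is a Lie algebra homomorphism $\eta\colon \mathfrak{g}\to\mathfrak{aff}(\R^n)$; this is exactly the content invoked in the statement of Theorem 4.2. Since $G$ is simply connected, this Lie algebra homomorphism integrates uniquely to a Lie group homomorphism $\tau\colon G\to \mathrm{Aff}(\R^n)$ whose derivative at the identity is $\eta$. Naturality of the exponential map with respect to Lie group homomorphisms then gives
$$
\tau(\exp X)=\exp\bigl(\eta(X)\bigr)
=\exp\!\begin{pmatrix}\nabla_X & X\\ 0 & 0\end{pmatrix},
$$
where the exponential on the right is computed in $\mathrm{GL}(\R^{n+1})$.

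Next I would compute this matrix exponential by a direct block calculation. A straightforward induction on $k\geq 1$ shows
$$
\begin{pmatrix}\nabla_X & X\\ 0 & 0\end{pmatrix}^{k}
=\begin{pmatrix}\nabla_X^{k} & \nabla_X^{k-1}X\\ 0 & 0\end{pmatrix},
$$
interpreting $\nabla_X$ as a linear endomorphism of $\mathfrak{g}\simeq\R^n$ and $X$ as a vector there. Summing the exponential series term by term gives
$$
\exp\!\begin{pmatrix}\nabla_X & X\\ 0 & 0\end{pmatrix}
=\begin{pmatrix}\exp\nabla_X & \displaystyle\sum_{k=1}^{\infty}\frac{\nabla_X^{\,k-1}}{k!}X \\[2pt] 0 & 1\end{pmatrix}.
$$
Recalling that under the identification $\mathrm{Aff}(\R^n)=\mathrm{GL}(\R^n)\ltimes\R^n$ the upper-left block is by definition the linear part, I would then read off that the linear part of $\tau(\exp X)$ is $\exp\nabla_X$, which is the desired conclusion.

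The only genuinely non-routine step is the matrix power identity, and even this is elementary: the off-diagonal block multiplication collapses because the bottom row is zero. The use of simple connectedness is essential only to guarantee that $\eta$ integrates to a global group homomorphism $\tau$; otherwise one would only get a homomorphism from the universal cover. I do not anticipate any real obstacle beyond keeping track of the identification $\mathfrak{g}\simeq\R^n$ consistently between the base-point of the affine representation and the action of $\nabla_X$ on $\mathfrak{g}$.
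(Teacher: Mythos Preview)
Your argument is correct and follows essentially the same route as the paper: use naturality of the exponential to write $\tau(\exp X)=\exp\eta(X)$ in $\mathrm{GL}(\R^{n+1})$, compute the block-upper-triangular matrix exponential, and read off the upper-left block. The paper compresses the matrix-power induction into a single displayed equation, whereas you spell it out; incidentally, your translational block $\sum_{k\ge 1}\frac{\nabla_X^{\,k-1}}{k!}X$ is the correct expression, while the paper's stated $(\exp\nabla_X)(X)$ appears to be a slip, though this is irrelevant to the proposition since only the linear part is at stake.
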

		\begin{proof}
			By (4.1), we have
			$$
			\tau(\exp X)=\exp 
			\begin{pmatrix}
			\nabla_X & X\\
			0 & 0
			\end{pmatrix} =
			\begin{pmatrix}
			\exp \nabla_X & (\exp \nabla_X )(X)\\
			0 & 1
			\end{pmatrix}.
			$$
			Hence, the linear part of $\tau(\exp X)$ is equal to $\exp \nabla_X$.
		\end{proof}

	Fix notations: let $\mathfrak{g}$ be a Lie algebra; $G$ the corresponding simply connected Lie group; $\eta$ an étale  affine representation $\mathfrak{g} \to \mathfrak{aff}$; $\theta$ the linear part of the corresponding affine representation of $G$; $i$ an identification $\mathfrak{g} \to \R^n$.
	 
	Define an almost complex structure $I$ on $\mathfrak{g} \ltimes_\eta \R^n$ by the rule
		$$
		I(X\ltimes_\eta Y)= -i^{-1} Y \ltimes_\eta iX. 
		$$
		
			\begin{theorem}[\cite{CO} or \cite{BD}]
				Let $\mathfrak{g}$, $\eta$, $I$ be as above. Then the almost complex structure $I$ is integrable.
			\end{theorem}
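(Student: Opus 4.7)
The strategy is that integrability of the left-invariant almost complex structure $I$ on the simply connected Lie group with Lie algebra $\mathfrak{h}:=\mathfrak{g}\ltimes_\eta\R^n$ is equivalent, by the Newlander--Nirenberg theorem applied to left-invariant tensors, to the vanishing of the algebraic Nijenhuis tensor
$$
N_I(U,V) \;=\; [U,V] + I[IU,V] + I[U,IV] - [IU,IV]
$$
for all $U,V\in\mathfrak{h}$. Since $N_I$ is bilinear in its arguments and vanishes on pairs of the same vector (being antisymmetric and alternating), it suffices to verify $N_I=0$ on mixed pairs drawn from generators of the two summands $\mathfrak{g}$ and $\R^n$.

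First I would use the identification $i:\mathfrak{g}\to\R^n$ to rewrite elements of $\mathfrak{h}$ as pairs $(X,Y)$ with $X,Y\in\mathfrak{g}$, so that the bracket of $\mathfrak{h}$ takes the form
$$
[(X_1,Y_1),(X_2,Y_2)] \;=\; \bigl([X_1,X_2],\; X_1Y_2 - X_2Y_1\bigr),
$$
where $XY:=\nabla_XY$ denotes the left-symmetric product coming from $\nabla$. In these coordinates $I(X,Y)=(-Y,X)$, and $I^2=-\text{Id}$ is immediate.

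Second, I would split the computation of $N_I$ into the three cases $\bigl((X,0),(X',0)\bigr)$, $\bigl((X,0),(0,Y')\bigr)$, and $\bigl((0,Y),(0,Y')\bigr)$, and expand each by direct substitution using the bracket formula above. After collecting terms, the three cases produce respectively $[X,X']+X'X-XX'$ in the $\mathfrak{g}$-component, $XY'-[X,Y']-Y'X$ in the $\R^n$-component, and $YY'-Y'Y-[Y,Y']$ in the $\mathfrak{g}$-component, and each of these vanishes by the LSA compatibility identity $XY-YX=[X,Y]$.

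The only obstacle is bookkeeping: keeping track of signs and of which summand each intermediate bracket lands in. No deeper mechanism is needed; notably the second left-symmetric axiom (symmetry of the associator) does not enter the Nijenhuis computation—once the semidirect product bracket of $\mathfrak{h}$ is in hand, the commutator relation alone suffices, so $N_I\equiv 0$ and $I$ is integrable.
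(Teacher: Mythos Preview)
The paper does not supply its own proof of this theorem; it simply cites \cite{CO} and \cite{BD}. Your direct verification via the algebraic Nijenhuis tensor is correct and is exactly the standard argument one finds in those references: reduce to the three types of pairs coming from the splitting $\mathfrak{h}=\mathfrak{g}\oplus\R^n$, and in each case the residual expression is $XY-YX-[X,Y]$ up to sign, which vanishes by torsion-freeness.

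Your closing remark is accurate but worth sharpening: the associator-symmetry axiom does not appear in the Nijenhuis computation itself, yet it is not absent from the argument. It is precisely what guarantees that $X\mapsto\nabla_X$ is a Lie algebra representation (flatness), so that the semidirect-product bracket on $\mathfrak{h}$ satisfies the Jacobi identity and $I$ lives on an honest Lie algebra. So the second LSA axiom is hidden in the phrase ``once the semidirect product bracket of $\mathfrak{h}$ is in hand.''
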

			
			\begin{defin}
				The algebra $\mathfrak{g}\ltimes_\eta \R^n$ is called {\bfseries associated with the connection} $\nabla$ and denoted by $\mathfrak{g_\nabla}$.
			\end{defin}
		
		\begin{proposition}
			
			Let $\mathfrak{g}, \nabla, \theta, \eta$ be as above and $G_\nabla$ the simply connected Lie group corresponded to $\mathfrak{g}_\nabla$. Then 
			$$G_\nabla=G\ltimes_\theta \R^n.$$
			Moreover, there is an identification $TG=G \ltimes_\theta \R^n$ such that fields of the form $0\oplus I\mathfrak{g}\subset T(TG)$ are vertical, that is, they lie in $\ker d\pi$, where $\pi: TG \to G$ is a projection.
			
		\end{proposition}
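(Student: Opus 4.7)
My plan proceeds in three steps: (a) establish the Lie-group identification $G_\nabla = G\ltimes_\theta \R^n$ by integrating the Lie-algebra semidirect product to the group level; (b) identify $TG$ with $G\times \R^n$ as a smooth manifold via left trivialization; and (c) verify the vertical property by a direct computation of the relevant left-invariant vector field.

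For (a), note that $\mathfrak{g}_\nabla = \mathfrak{g}\ltimes_\eta \R^n$ is the semidirect-product Lie algebra in which $X\in \mathfrak{g}$ acts on $\R^n$ by the linear part $\nabla_X$ of $\eta(X)$, as read off from (4.1). Since $G$ is simply connected, the representation $\nabla:\mathfrak{g}\to\mathfrak{gl}(\R^n)$ integrates uniquely to a Lie-group representation $\Theta:G\to\text{GL}(\R^n)$, and the simply connected Lie group with Lie algebra $\mathfrak{g}_\nabla$ is then $G\ltimes_\Theta \R^n$. Proposition 4.3 gives $\Theta(\exp X) = \exp \nabla_X = \theta(\exp X)$ for every $X\in\mathfrak{g}$, and since $\exp(\mathfrak{g})$ generates the connected group $G$, we deduce $\Theta = \theta$. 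Hence $G_\nabla = G\ltimes_\theta \R^n$.

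For (b) and (c), I would take the left trivialization $TG \simeq G\times\mathfrak{g}$, $(g, X)\mapsto dL_g X$, and postcompose with the identification $i:\mathfrak{g}\simeq \R^n$ to obtain a diffeomorphism $TG \simeq G\ltimes_\theta \R^n$ under which the bundle projection $\pi:TG\to G$ becomes the first projection. The defining formula $I(X\ltimes_\eta Y) = -i^{-1} Y\ltimes_\eta iX$ immediately gives $I\mathfrak{g} = \{0\}\oplus \R^n \subset \mathfrak{g}_\nabla$. For $Y\in \R^n$, the left-invariant vector field generated by $(0, Y)$ evaluated at $(g, a)\in G\ltimes_\theta \R^n$ is the derivative at $t=0$ of the curve $(g, a)\cdot\exp(t(0, Y)) = (g, a + t\theta(g) Y)$, which equals $(0, \theta(g) Y)$ and thus lies in $\ker d\pi$. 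I do not anticipate a substantive obstacle: the only delicate point is matching the Lie-algebra derivation $\nabla$ with its group-level integration $\theta$, which is precisely the content of Proposition 4.3.
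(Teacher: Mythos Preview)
Your proof is correct and follows essentially the same route as the paper: integrate the algebra action $\nabla$ to the group level and invoke Proposition~4.3 to identify it with $\theta$, then trivialize $TG$ over $G$ and verify that the left-invariant fields coming from $\{0\}\oplus\R^n$ are tangent to the fibers. The only noticeable difference is that you trivialize $TG$ via left translation $(g,X)\mapsto dL_gX$, whereas the paper trivializes via the flat connection $\nabla$; both send the bundle projection to the first factor, so both suffice for the verticality claim, and your explicit curve computation $(g,a)\cdot(e,tY)=(g,a+t\theta(g)Y)$ makes that step more transparent than the paper's one-line assertion.
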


		\begin{proof}
			By the definition, $\mathfrak{g}_\nabla$ is isomorphic to $\mathfrak{g} \ltimes_\eta \R^n$. The corresponding Lie group is equal to the semidirect product $G \ltimes \R^n$ with respect to the action such that the element $\exp X$ acts on $\R^n$ by $\exp \nabla_X$. By Proposition 4.6, this action equals $\theta$. Thus, the corresponding Lie group $G_\nabla$ is equal to $G \ltimes_\theta \R^n$

			Using the trivialization of the tangent bundle of $G$ by the flat connection $\nabla$, we identify $TG$ with $G \times \R^n$. Define the multiplication by
			$$
			(g_1\times X_1)(g_2\times X_2)=(g_1 g_2) \times (X_1 + \theta(g_1) (X_2)).
			$$
			This multiplication is equal to the multiplication on $G_\nabla =G\ltimes_\theta \R^n$. Thus, the group $TG$ with this multiplication is isomorphic to $G_\nabla$. Moreover, the left invariant fields corresponding to the subalgebra $0\oplus I \mathfrak{g}$  are actually vertical.
			
		\end{proof}

	\begin{theorem}
		Let $G$ be a simply connected Lie group equipped with a left invariant affine structure, $\mathfrak{g}$ the corresponding Lie algebra, and $\theta$ the linear part of the corresponding affine action of $G$. Then there exists a left invariant integrable complex structure $I$ on the group 
		$$
		G\ltimes_\theta \R^n\simeq TG
		$$ 
		defined in Proposition 4.6 such that $I$ swaps vertical and horizontal tangent subbundles.
	\end{theorem}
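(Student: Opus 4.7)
The plan is to assemble the statement from Lie-algebra-level material already in Section~4 and transfer it to the Lie group. Three ingredients do essentially all the work: the integrability of the almost complex structure $I$ on $\mathfrak{g}\ltimes_\eta\R^n$ provided by Theorem~4.4; the identification $G_\nabla\simeq G\ltimes_\theta\R^n\simeq TG$ supplied by Proposition~4.6 together with its description of the vertical subbundle; and the fact that both the flat horizontal distribution on $TG$ and the almost complex structure are left-invariant, so every claim can be checked at the identity.

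First I would extend $I$ by left translation to a left-invariant almost complex structure on the simply connected Lie group $G\ltimes_\theta\R^n$. The Nijenhuis tensor of a left-invariant almost complex structure is itself left-invariant, so its vanishing on left-invariant fields (Theorem~4.4) forces vanishing everywhere; hence $I$ is integrable on the whole group.

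Next I would use Proposition~4.6 to identify $TG$ with $G\ltimes_\theta\R^n$ via the flat trivialization of the tangent bundle by parallel transport. At the identity the subalgebra $0\oplus\R^n$ is tangent to the fiber of $\pi : TG\to G$ and, by left translation, spans the vertical subbundle $\ker d\pi$. The complementary subalgebra $\mathfrak{g}\oplus 0$ corresponds under the flat trivialization to $\nabla$-parallel sections, so its left translates span the horizontal distribution of $\nabla$. From the defining formula
$$
I(X\ltimes_\eta 0)=0\ltimes_\eta iX, \qquad I(0\ltimes_\eta Y)=-i^{-1}Y\ltimes_\eta 0,
$$
one reads that $I$ interchanges these two subspaces at the identity, and by left-invariance the swapping extends to every tangent space.

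The argument should present no serious obstacle: the substantive input (integrability at the Lie algebra level and the identification of $G_\nabla$ with $TG$) is already handled in Theorem~4.4 and Proposition~4.6, and the present theorem is essentially a repackaging of those results. The only place one must be attentive is in checking that the horizontal distribution for $\nabla$ on $TG$ coincides with the left translates of $\mathfrak{g}\oplus 0$, which follows because the trivialization used in Proposition~4.6 is precisely the one induced by $\nabla$-parallel transport.
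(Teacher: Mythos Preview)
Your proposal is correct and follows the same route as the paper: the paper's own proof is the single line ``The theorem follows from Theorem 4.4 and Proposition 4.6,'' and you have simply unpacked why those two results suffice. Your explicit remark that the horizontal distribution of $\nabla$ coincides with the left translates of $\mathfrak{g}\oplus 0$ is a detail the paper leaves implicit, so your write-up is if anything more complete.
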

	\begin{proof}
		The theorem follows from Theorem 4.4 and Proposition 4.6.
	\end{proof}

	\section{Semi-sasakian Lie algebras and groups}
		\begin{defin}
			A {\bfseries locally conformally symplectic} (shortly, {\bfseries lcs}) {\bfseries algebra}  is a $2n$-dimensional Lie algebra $\g$ endowed with a 2-form $\omega \in \Lambda^2 \g^*$ and 1-form $\vartheta\in\g^*$ such that 
			$$
			\Omega^n\ne 0, \ \ \ d\vartheta=0, \ \  \text{and} \ \ \ d\Omega = \vartheta \wedge\omega.
			$$
			If $(\g,\Omega,\vartheta)$ is a {\bfseries lcs Lie algebra} then we say that $(G,\Omega,\vartheta)$ is lcs Lie group. Here we consider $\Omega$ and $\vartheta$ as tensors on left invariant vector fields.  
		\end{defin} 
		
		\begin{proposition}[\cite{ABP}]
			Any lcs Lie algebra $(\mathfrak{h},\Omega, \vartheta)$ takes a form $(\R\ltimes_D \g, \Omega=\omega+\vartheta \wedge \eta)$, where $\g$ is a Lie algebra, $D$ a derivation of $\g$, $\vartheta$ a 1-form on $\R\ltimes_D \g$ defined by $\vartheta(r\ltimes_D v)=r$, $\omega$ and $\eta$ are $G$-invariant 2-form and 1-form correspondingly satisfying
			\begin{equation}
			d^\g\omega =0, \ \ \ \omega+D^*\omega-d^\g\eta = 0  \ \ \ \text{and} \ \ \ \eta\wedge \omega^n \ne 0.
			\end{equation}
			On the other side, if a collection $(\R\ltimes_D \g, \Omega=\omega+\vartheta \wedge \eta)$ satisfies (5.1) then it defines an lcs algebra.
		\end{proposition}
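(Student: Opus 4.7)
The plan is to exploit the closedness of $\vartheta$ to produce a codimension-one ideal in $\mathfrak{h}$, and then decompose $\Omega$ along the resulting splitting. First, since $d\vartheta=0$, the kernel $\g:=\ker\vartheta\subset\mathfrak{h}$ is a subalgebra; assuming $\vartheta\neq 0$ (otherwise $\Omega$ is already symplectic and there is nothing to decompose), $\g$ is in fact a codimension-one ideal. Choosing any $E\in\mathfrak{h}$ with $\vartheta(E)=1$ splits $\mathfrak{h}=\R E\oplus\g$ as vector spaces, and since $\g$ is an ideal the map $D:=\mathrm{ad}_E|_{\g}$ is a derivation of $\g$. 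The bracket of $\mathfrak{h}$ is then exactly that of $\R\ltimes_D\g$, and $\vartheta$ is the projection $r\ltimes_D v\mapsto r$.

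Next I would decompose the $2$-form. Set $\omega\in\Lambda^2\g^*$ equal to the restriction $\Omega|_{\Lambda^2\g}$, and define $\eta\in\g^*$ by $\eta(X):=\Omega(E,X)$ for $X\in\g$. Then tautologically $\Omega=\omega+\vartheta\wedge\eta$. The non-degeneracy condition becomes elementary: since $\omega^{n+1}\in\Lambda^{2n+2}\g^*=0$ and $(\vartheta\wedge\eta)^2=0$, one has $\Omega^{n+1}=(n+1)\,\vartheta\wedge\eta\wedge\omega^n$, so $\Omega^{n+1}\neq 0$ is equivalent to $\eta\wedge\omega^n\neq 0$.

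The main work is translating the cocycle condition $d\Omega=\vartheta\wedge\Omega$ via the Chevalley--Eilenberg differential on $\R\ltimes_D\g$. For a form $\alpha$ pulled back from $\g$, the differential splits into the intrinsic $d^{\g}$ on vectors in $\g$ plus a correction coming from $\mathrm{ad}_E=D$: evaluating on $(E,X,Y)$ with $X,Y\in\g$ yields $d\omega(E,X,Y)=-(D^{*}\omega)(X,Y)$ and $d\eta(E,X)=-(D^{*}\eta)(X)$, since the bracket with $E$ is precisely $D$. Evaluating $d\Omega=\vartheta\wedge\Omega$ on triples wholly in $\g$ gives $d^{\g}\omega=0$, and evaluating on $(E,X,Y)$ collects the remaining terms into the single identity $\omega+D^{*}\omega-d^{\g}\eta=0$, where the signs emerge from the chosen convention for the Chevalley--Eilenberg differential and the wedge product. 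This accounts for all three conditions in (5.1).

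For the converse, given $(\g,D,\omega,\eta)$ satisfying (5.1), one forms $\mathfrak{h}=\R\ltimes_D\g$, sets $\vartheta$ to be the projection to $\R$ and $\Omega=\omega+\vartheta\wedge\eta$, and reads the three computations above in reverse to verify $d\vartheta=0$, $d\Omega=\vartheta\wedge\Omega$, and $\Omega^{n+1}\neq 0$. I expect the only real obstacle to be careful sign-bookkeeping in the Chevalley--Eilenberg formula and in the various wedge products involving $\vartheta$; once the signs are fixed, the verification reduces to routine trilinear identities.
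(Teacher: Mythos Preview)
The paper does not supply its own proof of this proposition; it is quoted from \cite{ABP} and used as a black box. Your argument is the standard one and is correct: the ideal $\g=\ker\vartheta$ obtained from $d\vartheta=0$, the splitting $\mathfrak{h}\cong\R\ltimes_D\g$ with $D=\mathrm{ad}_E|_{\g}$, the decomposition $\Omega=\omega+\vartheta\wedge\eta$, the binomial identity $\Omega^{n+1}=(n+1)\,\vartheta\wedge\eta\wedge\omega^n$, and the evaluation of $d\Omega=\vartheta\wedge\Omega$ on triples in $\g$ and on triples of the form $(E,X,Y)$ all go through as you describe. Your caveat about sign bookkeeping is apt: the sign in front of $d^{\g}\eta$ in (5.1) depends on the conventions adopted for $D^{*}$ and for the Chevalley--Eilenberg differential, so you should fix those conventions explicitly at the outset and verify consistency with the statement.
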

		\begin{defin}
			Let $\g$ be a $2n+1$-dimensional Lie algebra, $D$ a derivation of $\g$, 
			$\omega$ a 2-form on $\g$, and $\eta$ a 1-form on $\g$. If 
			$$
			d\omega =0, \ \ \ \omega+D^*\omega-d\eta = 0  \ \ \ \text{and} \ \ \ \eta\wedge \omega^n \ne 0
			$$
			then we say that a collection $(\g,D, \omega,\eta)$ is a {\bfseries semi-contact Lie algebra}. 
		\end{defin}
		
		According to Definition 5.3 and Proposition 5.2, $(\g,D, \omega,\eta)$ is a semi-contact Lie algebra if and if $(\R\ltimes_D \g, \omega+\vartheta \wedge \eta)$ is lcs, where $\vartheta$ is as above. Thus, there is a one-to-one correspondence between lcs and Lie semicontact algebras.

		\begin{defin}
			A {\bfseries semi-contact Lie group} is a triple $(G,\theta, \hat\Omega)$, where $G$ is a Lie algebra, $\theta$ an action of $\R^{>0}$ on $G$, and $\hat\Omega$ a symplectic form on $\R^{>0}\ltimes_\theta G$ such that $\omega$ is invariant with respect to the left action of $G$ and satisfying $\lambda_q^* \hat\Omega =q^2 \hat\Omega$, where $\lambda_q x= (q\ltimes 1 )(x)$.
		\end{defin}
		
		\begin{proposition}
			Let $G$ be a Lie group, $t$ a coordinate on $\R^{>0}$, $\theta$ an automorphism of $G$, and $\Omega$ a 2-form on $\R^{>0}\ltimes_\theta G$. Then the collection $(G,\theta,\hat\Omega)$ is a semi-contact Lie group if and only if $(\R^{>0}\ltimes_\theta G,{\Omega}=t^{-2}\hat\Omega,2tdt)$ is an lcs Lie group.
		\end{proposition}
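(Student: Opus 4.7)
The plan is to verify each of the three defining conditions of an lcs Lie group on the triple $(\R^{>0}\ltimes_\theta G, \Omega = t^{-2}\hat\Omega, \vartheta)$ and translate it directly into the semi-contact data $(G,\theta,\hat\Omega)$. The underlying Lie group is the same in both pictures, the rescaling $\Omega = t^{-2}\hat\Omega$ is a bijection on 2-forms, and the proof reduces to a term-by-term matching of the three axioms.

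First I would verify invariance and non-degeneracy. Under left translation by $(q_0,g_0)$ in the semidirect product one has $(t,g) \mapsto (q_0 t, g_0\theta_{q_0}(g))$, so $t$ is multiplied by $q_0$. Thus the two conditions ``$\hat\Omega$ is $G$-invariant'' and ``$\lambda_q^*\hat\Omega = q^2\hat\Omega$'' combine to give left invariance of $\Omega$: the pullback of $t^{-2}\hat\Omega$ is $(q_0 t)^{-2}\cdot q_0^2 \hat\Omega = t^{-2}\hat\Omega$. The identity $\Omega^{n+1} = t^{-2(n+1)}\hat\Omega^{n+1}$ shows that the top-degree non-degeneracy conditions are equivalent.

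Next I would handle the closedness conditions. The equality $d\vartheta = 0$ is immediate since $\vartheta$ is proportional to $dt$ with a coefficient depending only on $t$, hence is exact on the $\R^{>0}$ factor. The central computation is
\begin{equation*}
d\Omega - \vartheta\wedge\Omega = d(t^{-2})\wedge\hat\Omega + t^{-2} d\hat\Omega - \vartheta\wedge t^{-2}\hat\Omega.
\end{equation*}
With $\vartheta$ normalized so that the two terms of the form $dt\wedge\hat\Omega$ cancel, this reduces to $t^{-2}d\hat\Omega$, giving the equivalence of $d\Omega = \vartheta\wedge\Omega$ with $d\hat\Omega = 0$, i.e.\ with $\hat\Omega$ being symplectic.

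The only real obstacle is bookkeeping: fixing the sign and the exponent of $t$ in $\vartheta$ so that the $d(t^{-2})\wedge\hat\Omega$ correction is exactly cancelled by $\vartheta\wedge t^{-2}\hat\Omega$ (and so that $\vartheta$ is itself left invariant under the semidirect action, which forces $\vartheta$ to be proportional to $t^{-1}dt = d\log t$). Once this normalization is in place, the three lcs axioms correspond one-to-one with the three semi-contact axioms, and both directions of the equivalence are read off the same computation.
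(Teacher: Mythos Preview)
Your approach is essentially the same as the paper's: compute $d\Omega = d(t^{-2}\hat\Omega)$ and observe that the $dt\wedge\hat\Omega$ term becomes $\vartheta\wedge\Omega$, reducing the lcs condition to $d\hat\Omega=0$, and then match the scaling condition $\lambda_q^*\hat\Omega=q^2\hat\Omega$ with left-invariance of $\Omega$. You are in fact more thorough than the paper's proof, which only writes out the differential computation and the scaling equivalence explicitly; you also check non-degeneracy $\Omega^{n+1}\ne 0 \Leftrightarrow \hat\Omega^{n+1}\ne 0$, the closedness of $\vartheta$, and the fact that left-invariance forces $\vartheta$ to be proportional to $t^{-1}dt$ (indeed the statement's ``$2tdt$'' is a typo for $2t^{-1}dt$, up to sign, as your normalization discussion correctly anticipates).
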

		
		\begin{proof}
			We have
			$$
		 d\Omega=d\left(t^{-2}\hat\Omega\right)= 2t^{-3} dt\wedge\hat\Omega +t^{-2} d\hat\Omega=\left(2t^{-1}dt\right)\wedge {\Omega}+t^{-2}d\hat\Omega.
			$$
			Thus, $d{\Omega}=2tdt\wedge {\Omega}$ if and only if $d\hat\Omega=0$. 
			
			Moreover $\hat\Omega=t^2  \Omega$ satisfies $\lambda_q^*\hat\Omega = q^2\hat\Omega$ if and only if $\lambda_q\Omega={\Omega}$.
		\end{proof}
		\begin{cor}
			Let $(G,\theta, \Omega)$ be a semi-contact Lie group. Then the form $\Omega$ on $\R^{>0}\ltimes_\theta G$ can be written as $\Omega = t^2\omega + t dt\wedge\eta$, where $\omega$ and $\eta$ are $G$-invariant 2-form and 1-form.
		\end{cor}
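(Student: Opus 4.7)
The plan is to decompose $\Omega$ according to the splitting $T(\R^{>0}\ltimes_\theta G) = \R\d_t \oplus \ker dt$ and then use the homothety $\lambda_q^*\Omega = q^2\Omega$ to pin down the $t$-dependence of each piece. Concretely, set $\hat\eta := \iota_{\d_t}\Omega$ and $\hat\omega := \Omega - dt\wedge\hat\eta$; this is the unique decomposition $\Omega = \hat\omega + dt\wedge\hat\eta$ with both $\hat\omega$ and $\hat\eta$ annihilated by $\iota_{\d_t}$.

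The homothety then fixes the scaling of each piece. From the multiplication rule $(q,e_G)(t,g) = (qt,\theta_q g)$ in $\R^{>0}\ltimes_\theta G$ one reads off $(\lambda_q)_*\d_t = q\d_t$, and using $\iota_X\phi^*\gamma = \phi^*\iota_{\phi_*X}\gamma$ I obtain
$$\iota_{\d_t}(\lambda_q^*\Omega) = \lambda_q^*\iota_{q\d_t}\Omega = q\,\lambda_q^*\hat\eta.$$
Comparing with $\iota_{\d_t}(q^2\Omega) = q^2\hat\eta$ forces $\lambda_q^*\hat\eta = q\hat\eta$, and subtracting the $dt\wedge\hat\eta$ contribution from the homothety identity then gives $\lambda_q^*\hat\omega = q^2\hat\omega$.

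Now set $\omega := t^{-2}\hat\omega$ and $\eta := t^{-1}\hat\eta$. Using $\lambda_q^* t = qt$, one checks directly that $\omega$ and $\eta$ are $\lambda_q$-invariant for every $q$; they are also $G$-invariant, inherited from $\Omega$, because left translation by $h\in G$ fixes both $t$ and $\d_t$ and hence preserves the decomposition. Invariance under $\lambda_q$ together with $G$-left-translation makes $\omega$ and $\eta$ left-invariant on all of $\R^{>0}\ltimes_\theta G$, and together with the horizontality $\iota_{\d_t}\omega = \iota_{\d_t}\eta = 0$ this identifies them with a $G$-invariant 2-form and a $G$-invariant 1-form on $G$. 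By construction $\Omega = t^2\omega + t\,dt\wedge\eta$, which is the claimed expression.

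I do not anticipate any real obstacle: the argument is a mechanical type-decomposition of a 2-form on a product manifold. The only step requiring a moment of care is extracting $(\lambda_q)_*\d_t = q\d_t$ from the explicit semidirect product multiplication; everything else is formal manipulation of differential forms.
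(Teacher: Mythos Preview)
Your argument is correct. The decomposition $\Omega=\hat\omega+dt\wedge\hat\eta$ with $\iota_{\partial_t}\hat\omega=\iota_{\partial_t}\hat\eta=0$ is the standard one, and your use of $(\lambda_q)_*\partial_t=q\partial_t$ together with the naturality of contraction correctly pins down the scaling $\lambda_q^*\hat\eta=q\hat\eta$, $\lambda_q^*\hat\omega=q^2\hat\omega$. The final identification is also fine: since $t\partial_t$ is the left-invariant field generating the $\R^{>0}$-factor, the horizontality $\iota_{\partial_t}\omega=\iota_{\partial_t}\eta=0$ means that $\omega,\eta$, viewed as elements of $\Lambda^*(\R\ltimes_D\mathfrak g)^*$, actually lie in $\Lambda^*\mathfrak g^*$, which is exactly what ``$G$-invariant form'' means here.

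Your route differs from the paper's. The paper first invokes Proposition~5.5 to pass from the semi-contact data $(G,\theta,\Omega)$ to the lcs Lie group $(\R^{>0}\ltimes_\theta G,\ t^{-2}\Omega,\ 2t^{-1}dt)$, and then appeals to the structure result Proposition~5.2 (quoted from \cite{ABP}) saying that every lcs Lie algebra has the form $(\R\ltimes_D\mathfrak g,\ \omega+\vartheta\wedge\eta)$; multiplying back by $t^2$ gives the claim. So the paper's proof is short because the real work is outsourced to \cite{ABP}. Your proof is self-contained and more elementary: you extract the decomposition directly from the defining homothety $\lambda_q^*\Omega=q^2\Omega$ and $G$-invariance, without ever passing through the lcs formalism. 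The trade-off is that the paper's approach makes the link to the lcs/semi-contact dictionary explicit, whereas yours shows that this particular corollary needs nothing beyond elementary manipulation of invariant forms.
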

		
		\begin{proof}
			By Proposition 5.5, $\left(\R^{>0}\ltimes_\theta G, \hat{\Omega}=t^{-2}\Omega\right)$ is lcs. Let $D$ be a derivation of $\g$ such that $\exp D= \theta$. Consider an identification of $\R\ltimes_D \g$ with the left invariant fields on $\R^{>0}\ltimes_\theta G$ such that
			$$
			1\ltimes_D 0= t \dt.
			$$
			Let $\vartheta$ be as in Proposition 5.2. Then
			$$
			\vartheta = t^{-1}dt.
			$$
			By Proposition 5.2, we have  $$\hat \Omega =\omega +t^{-1}dt \wedge\eta.$$ Thus, we have 
			$$
			\Omega= t^2\omega +tdt\wedge\eta.
			$$
		\end{proof}
		
		\begin{theorem}
			Let $\g$ be a Lie algebra of left invariant vector fields on a Lie group $G$, $D$ a derivation of $\g$, $\theta=\exp D$ the corresponding automorphism of $G$, $\omega$ and $\eta$ left invariant $2$-form and $1$-form correspondingly. Then the following conditions are equivalent:
			\begin{itemize}
				\item [(i)] $(\g,D,\omega,\eta)$ is a semicontact Lie algebra.
				\item[(ii)] $\left(\R^{>0}\ltimes_\theta G, \Omega=\omega+t^{-1}\eta\wedge dt,2t^{-1}dt
				 \right)$ is an lcs Lie group.
				\item[(iii)] $(G,\theta, \hat\Omega=t^2\omega+tdt\wedge\eta)$ is a semi-contact Lie group.

			\end{itemize}
		\end{theorem}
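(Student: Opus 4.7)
The plan is to obtain the triple equivalence as a direct composition of Proposition 5.2 (the correspondence between semi-contact Lie algebras and lcs Lie algebras) with Proposition 5.5 (the correspondence between lcs Lie groups and semi-contact Lie groups). The theorem is essentially a dictionary result, so the substance of the proof is verifying that the specific 2-forms and 1-forms appearing in (i), (ii) and (iii) line up under these two correspondences.

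For (i) $\Leftrightarrow$ (ii), I would fix the identification between the Lie algebra $\R \ltimes_D \g$ and the left-invariant vector fields on $\R^{>0} \ltimes_\theta G$ by sending $1 \ltimes_D 0$ to $t\,\d/\d t$, so that $\theta = \exp D$ holds by construction. Under this identification, the distinguished 1-form $\vartheta$ of Proposition 5.2, defined by $\vartheta(r \ltimes_D v) = r$, becomes (a multiple of) $t^{-1}\,dt$, and the form $\omega + \vartheta \wedge \eta$ becomes $\omega + t^{-1}\eta \wedge dt$ up to the sign absorbed into $\eta$. Proposition 5.2 then states exactly that the conditions (5.1), which are the semi-contact conditions of Definition 5.3, hold if and only if the resulting triple $(\R^{>0}\ltimes_\theta G, \Omega, 2t^{-1}dt)$ is an lcs Lie group; this is (i) $\Leftrightarrow$ (ii).

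For (ii) $\Leftrightarrow$ (iii), I would directly invoke Proposition 5.5. That proposition gives a bijection between lcs Lie groups of the form $\bigl(\R^{>0}\ltimes_\theta G,\,\Omega,\, \text{Lee form}\bigr)$ and semi-contact Lie groups $(G,\theta,\hat\Omega)$ via the rule $\hat\Omega = t^2 \Omega$. Substituting $\Omega = \omega + t^{-1}\eta \wedge dt$ gives $\hat\Omega = t^2\omega + t\,\eta \wedge dt = t^2\omega + t\,dt\wedge\eta$ after the usual sign rearrangement, which is precisely the form appearing in (iii). Moreover Corollary 5.6 guarantees that every semi-contact group form decomposes in this manner with $G$-invariant $\omega$ and $\eta$, so no semi-contact group is missed.

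The proof is then complete by combining these two steps. The only real obstacle is bookkeeping: one must carefully track the sign and factor conventions (the sign of $dt \wedge \eta$ versus $\eta \wedge dt$, and the coefficient in the Lee form $2t^{-1}dt$ as written in (ii) versus the $t^{-1}dt$ emerging naturally from the identification of $\vartheta$) to see that the three triples written in (i), (ii), (iii) genuinely correspond. Once these conventions are aligned, the theorem is an immediate corollary of Propositions 5.2 and 5.5, with no new computation beyond the dictionary already recorded in Corollary 5.6.
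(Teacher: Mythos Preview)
Your proposal is correct and follows essentially the same approach as the paper: the equivalence (ii)$\Leftrightarrow$(iii) is obtained from Proposition~5.5, and (i)$\Leftrightarrow$(ii) is obtained from Proposition~5.2 via an identification of $\R\ltimes_D\g$ with the left-invariant fields on $\R^{>0}\ltimes_\theta G$. The only refinement is that the paper resolves the factor-of-two bookkeeping you flag by choosing the identification $1\ltimes_D 0 \mapsto \tfrac{1}{2}\,t\,\partial/\partial t$ (rather than $t\,\partial/\partial t$), which makes $\vartheta = 2t^{-1}\,dt$ on the nose and matches the Lee form in (ii) exactly.
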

		
		\begin{proof}
			$\text{(ii)}\Leftrightarrow\text{(iii)}$\ It follows from Proposition 5.5.
			
			$\text{(i)}\Leftrightarrow\text{(ii)}$ Consider an identification of $\R\ltimes_D \g$ with the left invariant fields on $\R^{>0}\ltimes_\theta G$ such that
			$$
			1\ltimes_D 0 = \frac{1}{2} t \dt.
			$$
			Let $\vartheta$ be as in Proposition 5.2. Then
			$$
			\vartheta = 2t^{-1} dt.
			$$
			By Proposition 5.2, $(\g,D,\omega,\eta)$ is a semicontact Lie algebra if and only if   $\left(\R\ltimes_D \g,\Omega,\vartheta=2t^{-1}dt\right)$ is an lcs Lie algebra. By the definition,  $(G,\theta, \omega,\eta)$ is a semi-contact Lie group if and only if  $\left(\R\ltimes_D \g,\Omega,\vartheta\right)$ is lcs Lie algebra. 
		\end{proof}
		\begin{defin}
			A {\bfseries locally conformally K\"ahler} (shortly, {\bfseries lck}) Lie algebra is an lcs $(\mathfrak{h},D,\Omega,\vartheta)$ Lie algebra endowed with a complex structure $I$ such that 
			$$
			g:=\Omega(*,I*)
			$$ 
			is a positive definite symmetric bilinear form (see \cite{HK}). The Lie group corresponding to an lck Lie algebra is called an lck Lie group.
		\end{defin}		
		\begin{defin}
			A semi-Sasakian Lie algebra is a semi-contact Lie algebra $(\g,D,\omega,\eta)$ with a left-invariant integrable  almost complex structure $I$ on $(\g\ltimes_D \R)$ such that 
			$$
			g:=\hat\Omega(*,I*)
			$$ 
			is a positive definite symmetric bilinear form, where $\hat\Omega=\omega+\vartheta\wedge \eta$ and $\vartheta$ is as in Proposition 5.2.
		\end{defin}
		
		\begin{defin}
			A semi-Sasakian Lie group is a semi-contact Lie group $(G,\theta,\hat\Omega)$ with a left invariant complex structure $I$ on $\R^{>0}\ltimes_\theta G$ such that $(\R^{>0}\ltimes_\theta G,\hat\Omega,I)$ is a K\"ahler manifold.
			
			Equivalently, a semi-Sasakian Lie is a Lie group $G$ equipped with an automorphism $\theta$ and a K\"ahler structure $(g,I)$ on $\R^{>0}\ltimes_\theta G$ such that $I$ is $\R^{>0}\ltimes_\theta G$-invariant, $g$ is $G$-invariant and satisfies $\lambda_q^* g= q^2g$, where $\lambda_q$ is as above.
		\end{defin}

		\begin{cor}
			Let $\g$ be a Lie algebra of the invariant vector fields on a Lie group $G$, $D$  the derivation of $\g$, $\theta=\exp D$  a corresponding automorphism of $G$, $I$ a complex structure on $\R\ltimes \g$, $\omega$ and $\eta$ left invariant $2$-form and $1$-form correspondingly. Then the following conditions are equivalent:
			\begin{itemize}
				\item [(i)] $(\g,D,\omega,\eta,I)$ is a semi-Sasakian Lie algebra.
				\item[(ii)] $\left(\R^{>0}\ltimes_\theta G, \Omega=\omega+t^{-1}\alpha\wedge dt,2t^{-1}dt,I \right)$ is an lck Lie group.
				\item[(iii)] $(G,\theta, \hat\Omega=t^2\omega+tdt\wedge\eta,I)$ is a semi-Sasakian Lie group.

			\end{itemize}
		\end{cor}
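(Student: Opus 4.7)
The plan is to reduce Corollary 5.11 to Theorem 5.7 by peeling off the complex structure $I$ and checking that Kähler compatibility translates consistently between the three pictures. Theorem 5.7 already supplies the three-way equivalence of the underlying semicontact/lcs/semi-contact data, so the only new content is the propagation of $I$ and the positive definiteness of the associated symmetric bilinear form.

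For (ii)$\Leftrightarrow$(iii), I would first invoke Proposition 5.5, which identifies $\hat\Omega$ with $t^2\Omega$ on $\R^{>0}\ltimes_\theta G$. Since the left-invariant complex structure $I$ is the same object on $\R^{>0}\ltimes_\theta G$ in both (ii) and (iii), the two symmetric bilinear forms $\Omega(\ast,I\ast)$ and $\hat\Omega(\ast,I\ast)$ differ pointwise by the strictly positive factor $t^2$. Hence one is a positive definite Riemannian metric precisely when the other is, and the Kähler condition in (ii) is equivalent to the Kähler condition in (iii).

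For (i)$\Leftrightarrow$(ii), I would use the identification $1\ltimes_D 0 = \tfrac{1}{2} t\dt$ from the proof of Theorem 5.7 to realize $\R\ltimes_D \g$ as the Lie algebra of left-invariant vector fields on $\R^{>0}\ltimes_\theta G$. Under this identification the complex structure $I$ on $\g\ltimes_D \R$ in (i) corresponds to the left-invariant complex structure on $\R^{>0}\ltimes_\theta G$ in (ii); integrability is a statement independent of the chosen $2$-form, and is equivalent on the algebra side and the group side by left invariance. The symmetric form $\hat\Omega(\ast,I\ast)$ in (i) is then the value at the identity of the form $\Omega(\ast,I\ast)$ in (ii), so positive definiteness at the identity propagates to a left-invariant Riemannian metric on the group.

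The only real bookkeeping is to make sure that the three different $2$-forms ($\hat\Omega$ in (i), $\Omega$ in (ii), $\hat\Omega$ in (iii)) and the associated rescalings line up with the conventions in Definitions 5.8--5.10; this is precisely what Theorem 5.7 and Proposition 5.5 already verify in the absence of $I$. Since every intermediate transformation is multiplication by a strictly positive scalar function, positive definiteness is preserved throughout, and no genuine analytic obstacle arises. The main thing to watch is the constant $\tfrac{1}{2}$ in the normalization of $t\dt$, which fixes the Lee form as $2t^{-1}dt$ in (ii); once this is kept consistent with Theorem 5.7, the three equivalences follow immediately.
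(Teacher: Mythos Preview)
Your proposal is correct and matches the paper's approach: the paper states this result as a corollary with no explicit proof, precisely because it is meant to follow immediately from Theorem~5.7 together with Definitions~5.8--5.10 once one observes that the complex structure $I$ is the same left-invariant object in all three pictures and that the relevant $2$-forms differ only by strictly positive scalar factors. Your write-up simply makes explicit the bookkeeping that the paper leaves to the reader.
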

		\begin{rem}
			Let $(G,\theta,g,I)$ be a semi-Sasakian Lie group. Let $\rho$ be a vector field corresponding to the left action of $\R^{>0}$ on $\R\ltimes_\theta G$. Define $s:\R^{>0}\ltimes_\theta G\to \R^{>0}\ltimes_\theta G$ by the rule 
			$$
			s(x)=g(\rho,\rho).
			$$
			Then $M=\{x\in \R\ltimes_\theta G|\ g(\rho,\rho|_M)=1\}$ is a Sasakian manifold. Moreover, we have an isomorphism of manifolds
			$$
			\alpha :\R^{>0}\ltimes_\theta G\to \R^{>0}\times G \ \ \ \alpha(p)=s(p)\times \left(\gamma_p\cap M\right),
			$$
			where $\gamma_p$ be an integral curve of $\rho$ containing $p$.
		Then, by Proposition 3.5 we have
			$$
			(\R^{>0}\ltimes_\theta G,g)\simeq (\R^{>0}\times M, g=s^2g_M+ds^2)
			$$
			where $g_M= g|_M$. Note that $M\subset G\times_\theta \R^{>0}$ is not necessary a subgroup. 
		\end{rem}

	\section{Projective Hessian Lie groups}
	\begin{defin}
		{\bfseries A Hessian Lie group} $(G,\nabla, g)$ is a Lie group $G$ endowed with a left invariant affine structure $\nabla$ and a left invariant Hessian metric $g$.
	\end{defin}
	\begin{theorem}
	Let $(G,\nabla,g)$ be an $n$-dimensional simply connected Hessian Lie group and $\theta$ the linear part of the corresponding affine action of $G$. Then there exists a left invariant Kähler metric on $G_\nabla=G\ltimes_\theta \R^n=TG.$
	\end{theorem}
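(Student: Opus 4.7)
The plan is to combine the r-map construction (Proposition 2.3/2.5) with the group-theoretic identification of $TG$ from Proposition 4.6 and Theorem 4.7, and to verify that left invariance of all data on $G$ propagates to left invariance of the Kähler metric on $G\ltimes_\theta\R^n$.

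First I would apply Proposition 2.3 to the underlying Hessian manifold $(G,\nabla,g)$: this produces a Kähler structure $(g^T,I)$ on $TG$, where $I$ is the complex structure built from $\nabla$ via $I(\partial/\partial x^i)=\partial/\partial y^i$ in flat coordinates. Then I would use Proposition 4.6 to identify $TG$ with $G\ltimes_\theta\R^n=G_\nabla$ as Lie groups, with the vertical subbundle corresponding to $0\oplus I\g$. Under this identification, Theorem 4.7 tells us that the complex structure $I$ coming from the r-map agrees with the left invariant complex structure on $G\ltimes_\theta\R^n$ coming from Theorem 4.4; in particular $I$ is left invariant for the group law on $G_\nabla$.

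The only remaining point is to check that $g^T$ is left invariant under the multiplication of $G\ltimes_\theta\R^n$. For this I would invoke the explicit formula of Proposition 2.5 (equation (3.2)),
\begin{equation*}
g^T(X,Y)=\pi^*g(X,Y)+\pi^*g(IX,IY)+\sqrt{-1}\,\pi^*g(IX,Y)-\sqrt{-1}\,\pi^*g(X,IY),
\end{equation*}
which expresses $g^T$ entirely in terms of $\pi^*g$ and $I$. Since $I$ is already known to be left invariant, it suffices to show that $\pi^*g$ is left invariant on $G_\nabla$. But the projection $\pi:G\ltimes_\theta\R^n\to G$, $(h,v)\mapsto h$, is a Lie group homomorphism by the very definition of the semidirect product, and $g$ is left invariant on $G$ by hypothesis; hence $\pi^*g$ is left invariant on $G_\nabla$. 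Consequently $g^T$ is left invariant.

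Putting these pieces together, $(G\ltimes_\theta\R^n,g^T,I)$ is a Kähler manifold with a left invariant complex structure and a left invariant Kähler metric, which is the desired conclusion. I do not anticipate a real obstacle: the content of the theorem is essentially the observation that the r-map, being a purely pointwise construction from $g$ and $\nabla$, intertwines the left invariant Hessian data on $G$ with the left invariant group structure on $G_\nabla$ provided by Proposition 4.6. The one thing worth stating carefully is that $\pi$ is a group homomorphism for the semidirect product, since this is precisely what transfers invariance from $G$ to $G_\nabla$.
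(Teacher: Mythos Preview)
Your proof is correct and follows essentially the same approach as the paper: apply the r-map (Propositions 2.3 and 2.5) to obtain the K\"ahler structure on $TG\simeq G\ltimes_\theta\R^n$, then check left invariance of $g^T$ via the formula expressing $g^T$ in terms of $\pi^*g$ and $I$. The paper verifies invariance separately under the two factors $G\ltimes 0$ and $\text{id}\ltimes\R^n$ (using the local K\"ahler potential $\pi^*\varphi$ for the $\R^n$-factor), whereas your observation that $\pi:G_\nabla\to G$ is a group homomorphism handles both factors at once --- a mild streamlining of the same argument.
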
	
	\begin{proof}
		The group $G\ltimes_\theta \R^n=TG$ is locally biholomorphic to $\mathfrak{g} \oplus I \mathfrak{g}$. Hence, we have the local coordinates $x_1,\ldots,x_n,y_1,\ldots,y_n$ on $G\ltimes_\theta \R^n=TG$ such that $I\frac{\partial}{\partial x_i}=\frac{\partial}{\partial y_i}$ and $x_1,\ldots,x_n$ are constant along any fiber of $\pi:TG\to G$. The Hessian metric $g$ is locally equivalent to $\text{Hess}\varphi$. Define the associated Hermitian metric $g^T$ as in Section 2. By Proposition 2.3, $g^T$ is a Hessian metric which is locally expressed by $\text{Hess}_\mathbb{C}(4\pi^*\varphi)$. The subgroup $\text{id}\ltimes \R^n \subset G\ltimes_\theta \R^n$ acts on fibers of $TG\to G$. The function $\pi^*\varphi$ is constant along the fibers hence $g^T=\text{Hess}_\mathbb{C}(4\pi^*\varphi)$ is invariant under the action of  $\text{Id}\ltimes_\theta \R^n \subset G\ltimes_\theta \R^n$. By Proposition 2.5, 
		$$
		g^T(X,Y)=\pi^*g(X,Y)+\pi^*g(IX,IY)+\sqrt{-1}\pi^*g(IX,Y)-\sqrt{-1}\pi^*g(X,IY).
		$$
		Moreover, $g$ is invariant under the action of $G\ltimes 0\subset G\ltimes_\theta \R^n$. Thus, $g^T$ is invariant under the action of $G\ltimes 0\subset G\ltimes_\theta \R^n$, also. Therefore, $g^T$ is invariant under the action of the group $G\ltimes_\theta \R^n$.
	\end{proof}
	
	\begin{defin}
		A Lie algebra $\mathfrak{g}$ is called {\bfseries projective} if there is an invariant affine structure $\nabla$ on $\mathfrak{g}\times \R$  such that 
		$$
		\nabla_X E =\nabla_E X = X,
		$$
		where $X\in \mathfrak{g}$ and $E\in \R$. A Lie group is called {\bfseries projective} if the corresponding Lie algebra is projective.  
	\end{defin}
	
	Note that if $G$ is a projective Lie group then there is an invariant affine structure on $G\times \R^{>0}$.
	
	\begin{defin}
		A {\bfseries projective Hessian Lie group} $(G,g_G)$ is a projective Lie group $G$ endowed with a left-invariant Riemannian metric $g_G$ such that $(G\times\R^{>0},\nabla,g=t^2g_G +dt^2)$ is a Hessian manifold, where $t$ is a coordinate on $\R^{>0}$ and $\nabla$ be the affine connection on $G\times \R^{>0}$ corresponding to the projective structure on $G$. A Lie algebra corresponding to projective Hessian Lie group is called a {\bfseries projective Hessian Lie algebra}.
		
	\end{defin}

	\begin{theorem}
		Let $(G, g_G$ be an $n$-dimensional simply connected projective Hessian Lie group and $\theta$ the linear part of the corresponding affine representation of $G \times \R^{>0}$. Then there exists a structure of a semi-Sasakian Lie group on  $G\ltimes_\theta \R^{n+1}$. Moreover, $G\ltimes_\theta \R^{n+1}\simeq 
		TG\times \R$. 
	\end{theorem}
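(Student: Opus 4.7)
The plan is to reduce the theorem to an application of the r-map (via Theorem~4.4 and the argument of Theorem~6.2) to the enlarged Lie group $G\times\R^{>0}$. By projectivity, $G\times\R^{>0}$ carries a $(G\times\R^{>0})$-invariant affine structure $\nabla$, and the projective Hessian hypothesis says that the cone metric $g=t^2g_G+dt^2$ is Hessian with respect to $\nabla$. The key point is that, although $g$ is only $G$-invariant (and scales by $\lambda_s^*g=s^2 g$ under $\R^{>0}$), the r-map of Section~2 still produces a K\"ahler structure $(g^T,I)$ on $T(G\times\R^{>0})$, and Theorem~4.4 applied to $G\times\R^{>0}$ yields a Lie group isomorphism
$$
T(G\times\R^{>0})\;\simeq\;(G\times\R^{>0})\ltimes_\theta\R^{n+1}
$$
with $I$ left invariant under the whole group.

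The first technical step is to identify how $\theta$ decomposes. From the projective relations $\nabla_X E=\nabla_E X=X$ ($X\in\g$) and the radiant identity $\nabla_E E=E$, one reads off $\nabla_E=\text{Id}$ on $\g\oplus\R$, so by Proposition~4.3 the $\R^{>0}$-factor of $G\times\R^{>0}$ acts on $\R^{n+1}$ by scalars: $\theta(1,t)=t\cdot\text{Id}$. A direct comparison of multiplication rules then gives the Lie group decomposition
$$
(G\times\R^{>0})\ltimes_\theta\R^{n+1}\;\simeq\;\R^{>0}\ltimes_\sigma\bigl(G\ltimes_\theta\R^{n+1}\bigr),\qquad \sigma_t(g,v)=(g,tv),
$$
displaying $G\ltimes_\theta\R^{n+1}$ as the base of a cone under the $\R^{>0}$-dilations. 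As a smooth manifold $G\ltimes_\theta\R^{n+1}=G\times\R^n\times\R=TG\times\R$, which handles the second claim.

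Using the dictionary of Corollary~5.11, the semi-Sasakian axioms then reduce to checking that $(g^T,I)$ on $(G\times\R^{>0})\ltimes_\theta\R^{n+1}$ is a K\"ahler structure which is $(G\ltimes_\theta\R^{n+1})$-invariant and satisfies the weight-two scaling $\lambda_t^*g^T=t^2 g^T$. The $G$-invariance of $g^T$ comes from the $G$-invariance of $g$ via the formula of Proposition~2.5; the $\R^{n+1}$-translation invariance is built into the r-map because the Hessian potential pulls back from the base and is constant in the fiber direction, exactly as in the proof of Theorem~6.2. The scaling identity is Proposition~3.5 applied to the Hessian cone $G\times\R^{>0}$, and the left invariance of $I$ --- in particular under the $\R^{>0}$ cone dilations --- is supplied by Theorem~4.4.

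The main obstacle I expect is the bookkeeping in the decomposition step: pinning down the scalar-action identity $\theta|_{\R^{>0}}(t)=t\cdot\text{Id}$ and rewriting the big semidirect product as an iterated one in the form required by the semi-Sasakian definition. Once this identity is in place, the remainder of the proof is an assembly of the invariance statements already exhibited in Sections~2--4 together with the cone-dilation computation of Section~3.
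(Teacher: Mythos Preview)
Your proposal is correct and follows essentially the same route as the paper's proof: build the K\"ahler structure $(g^T,I)$ on $T(G\times\R^{>0})\simeq (G\times\R^{>0})\ltimes_\theta\R^{n+1}$ via the r-map, check $G\ltimes_\theta\R^{n+1}$-invariance of $g^T$ exactly as in Theorem~6.2, obtain the weight-two dilation law from the cone argument of Proposition~3.6 (you cite 3.5, but the content you invoke is 3.6), and then rewrite the big group as $\R^{>0}\ltimes(G\ltimes_\theta\R^{n+1})$.

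The only genuine difference is in this last decomposition step. The paper isolates it as a separate abstract statement (Lemma~6.6) about commuting actions of two groups $F,G$ on a third group $H$, and then applies it with $F=\R^{>0}$, $H=\R^{n+1}$. You instead compute directly that $\nabla_E=\text{Id}$, hence $\theta|_{\R^{>0}}(t)=t\cdot\text{Id}$ is scalar and therefore commutes with $\theta|_G$, and then compare the two multiplication laws by hand. Both arguments amount to the same verification; the paper's formulation has the advantage of being reusable, while yours makes the scalar nature of the $\R^{>0}$-action explicit and simultaneously delivers the identification $G\ltimes_\theta\R^{n+1}\simeq TG\times\R$ (which the paper's proof leaves implicit). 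A minor point: the left invariance of $I$ and the group identification you attribute to Theorem~4.4 are in fact Proposition~4.6 and Theorem~4.7 in the paper's numbering; Theorem~4.4 only gives integrability.
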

	
	\begin{lemma}
		Let $F,G,H$ be groups, $\alpha$ and $\beta$ actions of $F$ and $G$ on $H$ respectively such that for any $f\in F, g\in G$, and $h\in H$ we have
		$$
		\alpha(f)\beta(g)h=\beta(g)\alpha(f)h.
		$$
		Then 
		$$
		(F\times G)\ltimes_{\alpha \times \beta} H=F\ltimes_{\text{id} \times \alpha} (G\ltimes_\beta H). 
		$$
	\end{lemma}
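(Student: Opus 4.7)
The plan is to exhibit the obvious set-theoretic bijection
$$
\Phi: (F\times G)\ltimes_{\alpha\times\beta} H \longrightarrow F\ltimes_{\mathrm{id}\times\alpha}(G\ltimes_\beta H), \qquad \Phi\bigl((f,g),h\bigr)=\bigl(f,(g,h)\bigr),
$$
and check that it is a group isomorphism. The argument splits into two verifications, both elementary; the commutation hypothesis $\alpha(f)\beta(g)=\beta(g)\alpha(f)$ enters in both.

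First I would verify that the right-hand side is a well-defined group, i.e.\ that the formula $f\cdot(g,h)=(g,\alpha(f)h)$ really defines an action of $F$ on $G\ltimes_\beta H$ by group automorphisms. The only nontrivial check is that it respects the semidirect-product multiplication
$$
(g_1,h_1)(g_2,h_2)=\bigl(g_1g_2,\,h_1\,\beta(g_1)h_2\bigr);
$$
applying $f\cdot$ to both sides and expanding, the equality reduces exactly to $\alpha(f)\beta(g_1)h_2=\beta(g_1)\alpha(f)h_2$, which is the hypothesis.

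Next I would compute the products on both sides of $\Phi$ and compare. On the left,
$$
\bigl((f_1,g_1),h_1\bigr)\bigl((f_2,g_2),h_2\bigr)=\bigl((f_1f_2,g_1g_2),\,h_1\,\alpha(f_1)\beta(g_1)h_2\bigr).
$$
On the right, unwinding the outer and inner semidirect products,
$$
\bigl(f_1,(g_1,h_1)\bigr)\bigl(f_2,(g_2,h_2)\bigr)
=\bigl(f_1f_2,\,(g_1,h_1)\cdot(g_2,\alpha(f_1)h_2)\bigr)
=\bigl(f_1f_2,\,(g_1g_2,\,h_1\,\beta(g_1)\alpha(f_1)h_2)\bigr).
$$
The two expressions agree under $\Phi$ precisely because $\beta(g_1)\alpha(f_1)=\alpha(f_1)\beta(g_1)$.

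The main (and only) obstacle is really just bookkeeping: one has to keep track of which factor acts on which, and recognize that the commutation of $\alpha$ and $\beta$ is used both to make the outer action well-defined and to identify the two multiplications. No genuine difficulty arises, and $\Phi$ is then the desired isomorphism of Lie groups (or abstract groups), with the tangent-level statement following by differentiating.
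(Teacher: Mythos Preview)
Your proof is correct and follows essentially the same approach as the paper: both identify the underlying sets and check that the two multiplications agree via the commutation hypothesis $\alpha(f)\beta(g)=\beta(g)\alpha(f)$. You additionally verify that $\mathrm{id}\times\alpha$ acts by automorphisms on $G\ltimes_\beta H$, a well-definedness step the paper leaves implicit.
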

	\begin{proof}
		Both semidirect products are equal to $F\times G \times H$ as sets. Thus, it is enough to check that two multiplications coincide. Write multiplication on the group $(F\times G)\ltimes_{\alpha \times \beta} H$
		$$
		((f_1\times g_1)\ltimes_{\alpha \times \beta} h_1)((f_2\times g_2)\ltimes_{\alpha \times \beta} h_2)=(f_1 f_2\times g_1 g_2)\ltimes_{\alpha \times \beta} h_1\alpha(f_1)\beta(g_1)h_2.
		$$
		Write multiplication on the group $F\ltimes_{\text{id} \times \alpha} (G\ltimes_\beta H)$
		$$
		(f_1\ltimes_{\text{id}\times \alpha} (g_1\ltimes_{\beta} h_1))(f_2\ltimes_{\text{id}\times \alpha} (g_2 \ltimes_ {\beta} h_2))=f_1 f_2\ltimes_{\text{id}\times \alpha} (g_1 \ltimes_{\beta} h_1)(g_2 \ltimes_{\beta} \alpha(f_1)h_2)=
		$$
		$$
		=f_1 f_2\ltimes_{\text{id}\times \alpha} (g_1 g_2 \ltimes_{\beta} h_1\beta(g_1)\alpha(f_1)h_2)=f_1 f_2\ltimes_{\text{id}\times \alpha} (g_1 g_2 \ltimes_{\eta} h_1\alpha(f_1)\beta(g_1)h_2).
		$$
		Two multiplications coincide.
		
	\end{proof}
	\begin{proof}[Proof of Theorem 6.5]
		There exist an invariant torsion free flat connection $\nabla$ on $G\times \R^{>0}$ and a Hessian metric $g$ invariant under the left action of $G$. Define a Kähler metric $g^T$ on $(G\times \R^{>0}) \ltimes_\theta \R^{n+1}$ as in the proof of Theorem 6.2. This metric is invariant under the left action of $(G\times \text{Id})\ltimes_\theta \R^{n+1}\subset (G\times \R^{>0}) \ltimes_\theta \R^{n+1}$ by the same argument as in the proof of Theorem 6.2. Moreover,
		$$
		g^T(X,Y)=\pi^*g(X,Y)+\pi^*g(IX,IY)+\sqrt{-1}\pi^*g(IX,Y)-\sqrt{-1}\pi^*g(X,IY),
		$$	
		where $\pi : G\times\R^{>0}\ltimes_\theta \R^{n+1}\simeq T(G\times \R^{>0}) \to G\times \R^{>0}$ is a projection. Let
		$$
		\lambda_q :G\times \R^{>0} \to G\times \R^{>0}, \ \ \lambda_q(p\times r)= p\times qr
		$$	
		and
		$$
		\mu_q :\left(G\ltimes_\theta \R^{>0}\right)\ltimes_\theta \R^{n+1} \to \left(G\ltimes_\theta \R^{>0}\right)\ltimes_\theta \R^{n+1}, \ \ \lambda_q((p\times r)\ltimes_\theta v)= (p\times qr)\ltimes_\theta \theta(q)v.
		$$
		Then we have the commutative diagram 
		$$
		\begin{CD}
		\left(G\ltimes_\theta \R^{>0}\right)\ltimes_\theta \R^{n+1} @>\mu_q>> \left(G\ltimes_\theta \R^{>0}\right)\ltimes_\theta \R^{n+1}\\
		@VV\pi V @VV\pi V  @.\\
		G\times\R^{>0} @>\lambda_q>> G\times\R^{>0}
		\end{CD},
		$$
		By the same argument as in Proposition 3.6, we get 
		\begin{equation}
		\mu_q^*g^T=q^2g^T.
		\end{equation}

		We constructed the Kähler metric $g^T$ on $(G\times \R^{>0}) \ltimes_\theta \R^{n+1}$ which is invariant under the action of $G\ltimes_\theta \R^{n+1}$ and satisfies (6.1). Also, by Lemma 6.6, we have 
		$$
		(G\times \R^{>0}) \ltimes_\theta \R^{n+1}= \R^{>0}\ltimes (G\ltimes_\theta \R^{n+1}).
		$$ Thus, $G\ltimes_\theta \R^{n+1}$ is a semi-Sasakian Lie group.
	\end{proof}

	Examples of projective Hessian Lie groups are described in the next two sections.

	\section{Regular convex cones} 
				
				\begin{defin}
					A subset $V\subset \R^n$ is called {\bfseries regular} if $V$ does not contain any straight line.
				\end{defin}

	Let $V\subset \R^n$ be a convex regular domain. We denote the maximal subgroup of $\text{GL}(\R^n)$ preserving $V$ by $\text{Aut}(V)$. Note that if $V$ is a regular convex cone then $$\text{Aut}(V)=(\text{Aut}(V)\cap \text{SL}(\R^n))\times \R^{>0}.$$ 

	The following theorem summarized known results.

	\begin{theorem}[\cite{vinb}, \cite{VGP}, \cite{shima}]
		Let $V \subset \R^n$ be a convex homogeneous regular domain, $${U=\R^n \oplus \sqrt {-1} V \subset \mathbb{C}^n},$$ and 
		$$
		\pi : U=\R^n \oplus \sqrt {-1} V \to \sqrt {-1} V \simeq V 
		$$ 
		be a projection. Then there exist a function $\varphi$ on $V$  and a subgroup $T\subset\text{Aut}(V)$ such that the following conditions are satisfied:	
				\begin{itemize}
				\item[(i)] $g_{can}=\text{Hess}\varphi$ is $\text{Aut}(V)$-invariant Hessian metric on $V$.
								
				\item[(ii)] $T$ acts on $V$ simply transitively.
				
				\item[(iii)] A group $\text{Aut}(V)\ltimes \R^n$ acts on $U$ by holomorphic automorphisms. Moreover, the subgroup $T\ltimes \R^n \subset \text{Aut}(V)\ltimes \R^n$ acts on $U$ simply transitively.
				
				\item[(iv)] The bilinear form $\text{Hess}_\CC (4\pi^* \varphi)$ is a $\text{Aut}(V)\ltimes \R^n$-invariant K\"ahler metric on $U$.
				
				\end{itemize}
	\end{theorem}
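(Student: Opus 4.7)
The plan is to assemble Theorem 7.2 from existing literature, since all four items are classical results from Vinberg's theory of homogeneous cones combined with the r-map already introduced in Section 2. For part (i), in the case that $V$ is (the interior of) a regular homogeneous convex cone, I would introduce Vinberg's characteristic function
$$
\varphi(x) = \log \int_{V^*} e^{-\langle x,\xi\rangle}\, d\xi,
$$
where $V^*$ is the dual cone. A change of variables shows that $\varphi(Ax) = \varphi(x) - \log|\det A|$ for $A \in \text{Aut}(V)$, so $\text{Hess}\,\varphi$ is $\text{Aut}(V)$-invariant; Cauchy--Schwarz applied to the defining integral yields positive-definiteness of the Hessian.

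For part (ii), I would appeal to Vinberg's theory of T-algebras (clans): a homogeneous convex regular cone carries a compatible left-symmetric algebra structure, and the associated simply connected solvable (triangular) Lie group $T$ acts simply transitively on $V$ by affine automorphisms. This is the substantive step and the main obstacle of the theorem — everything else either reduces to a direct computation or follows from results already stated in the paper.

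For (iii), any $A \in \text{Aut}(V) \subset \text{GL}(\R^n)$ extends $\CC$-linearly to a holomorphic automorphism of $\CC^n$ that preserves $U = \R^n + \sqrt{-1}V$, since it preserves both the real and imaginary parts. Real translations by $\R^n$ preserve $U$ as well, so together they generate a semidirect product $\text{Aut}(V) \ltimes \R^n$ acting on $U$ by holomorphic automorphisms; the subgroup $T \ltimes \R^n$ acts simply transitively on $U$ because $T$ acts simply transitively on the imaginary part $V$ and $\R^n$ acts simply transitively by translation on the real direction. For (iv), I would apply Proposition 2.3 to the Hessian manifold $(V, \text{Hess}\,\varphi)$: under the identification $TV \simeq V \oplus \R^n \simeq U$ via $(y,x) \leftrightarrow x + \sqrt{-1}y$, the complex structure defined in Section 2 matches the natural one on $U$, and Proposition 2.3 produces the Kähler metric $\text{Hess}_\CC(4\pi^*\varphi)$. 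Its invariance under $\text{Aut}(V) \ltimes \R^n$ follows by functoriality of the r-map from the $\text{Aut}(V)$-invariance of $\text{Hess}\,\varphi$, combined with the fact that $\pi^*\varphi$ is constant along the real fibers and hence invariant under the $\R^n$ translations.
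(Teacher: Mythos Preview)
Your outline is essentially what the paper does --- the paper's own ``proof'' of Theorem~7.2 consists entirely of citations (``See \cite{vinb} for (i) and (ii), \cite{VGP} for (iii), \cite{shima} for (iv)''), so you are actually supplying more detail than the original.

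There is, however, one genuine gap in your sketch. The theorem is stated for a convex homogeneous regular \emph{domain} $V\subset\R^n$, not a cone, yet your construction for item~(i) writes
\[
\varphi(x)=\log\int_{V^*}e^{-\langle x,\xi\rangle}\,d\xi,
\]
which is Vinberg's characteristic function for a \emph{cone}: the dual $V^*$ and the homogeneity argument $\varphi(Ax)=\varphi(x)-\log|\det A|$ both presuppose that $V$ is a linear cone preserved by scalar dilations. You flag this yourself (``in the case that $V$ is \dots\ a cone'') but never return to treat the general case. For an arbitrary homogeneous regular domain one must either invoke Koszul's canonical Hessian metric directly, or first pass to the associated cone via the correspondence recorded in Proposition~7.5 and then pull the construction back. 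Without one of these steps, item~(i) is only established in the conical case, which is precisely the content of the \emph{next} theorem (Theorem~7.3), not this one.

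The remainder of your outline --- the appeal to clans/T-algebras for (ii), complex-linear extension and real translation for (iii), and Proposition~2.3 plus fiberwise constancy of $\pi^*\varphi$ for (iv) --- is sound and matches the spirit of the cited sources.
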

	See \cite{vinb} for (i) and (ii), \cite{VGP} for (iii), \cite{shima} for (iv).
	\begin{theorem}
			
			Let $V\subset\R^n$ be a convex homogeneous regular cone and $U$, $\pi$, $T$, $\varphi$ as in Theorem 7.2. Denote ${T_\text{SL}=T\cap \text{SL}(\R^n)}$. Then following conditions are satisfied:
			
			\begin{itemize}
				\item[(i)] The exists a $\text{Aut}_\text{SL}(V)$-invariant conical Hessian metric $g_{con}=\text{Hess}\varphi$ on $V$. The dilation subgroup $\R^{>0}\subset \text{Aut}(V)$ acts on $g_{con}$ by the rule $\lambda_q^* g_{con} = q^2 g_{con}$, for any $q\in \R^{>0}$. 
				
				\item[(ii)] The bilinear form $\text{Hess}_\CC (4\pi^* \varphi)$ is a $T_\text{SL}$-invariant K\"ahler metric. Moreover, $\lambda_q^* g = q^2 g$.
			\end{itemize}
		
	\end{theorem}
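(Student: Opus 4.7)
The plan is to follow the construction of Theorem 7.2, but with the Koszul-Vinberg potential replaced by one that is positively homogeneous of degree $2$ on $V$, so that the resulting Hessian metric acquires the cone scaling $\lambda_q^* g_{con} = q^2 g_{con}$. The trade-off is that invariance under the full $\text{Aut}(V)$ is weakened to invariance under $\text{Aut}_\text{SL}(V) = \text{Aut}(V) \cap \text{SL}(\R^n)$, which is precisely what the statement requires.

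Concretely, let $F\colon V \to \R^{>0}$ be the Koszul characteristic function with its standard transformation rule $F(gx) = |\det g|^{-1} F(x)$; in particular $F$ is $\text{SL}$-invariant and $F(qx) = q^{-n} F(x)$ under dilation. Take $\varphi = c\, F^{-2/n}$ with $c$ a constant chosen below. Then $\varphi$ is $\text{Aut}_\text{SL}(V)$-invariant and positively homogeneous of degree $2$ in $x$, from which invariance of $g_{con}$ and the scaling $\lambda_q^* g_{con} = q^2 g_{con}$ both follow by the chain rule. The substantive verification is that $\text{Hess}\,\varphi$ is positive definite; writing $\psi := \log F$ one gets
$$
\text{Hess}\,\varphi \;=\; \varphi\!\left(\tfrac{4}{n^2}\, d\psi \otimes d\psi \;-\; \tfrac{2}{n}\, \text{Hess}\,\psi\right),
$$
and positivity is checked by decomposing $T_xV = \R\xi \oplus \ker d\psi$, where $\xi = x^i \d_i$ is the Euler field, using Euler's relation $\xi\psi = -n$ and the identities $(\text{Hess}\,\psi)(\xi,\xi) = n$, $\iota_\xi \text{Hess}\,\psi = -d\psi$, together with positive-definiteness of the Koszul-Vinberg metric $\text{Hess}\,\psi$. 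If the naive choice of $c$ does not give the desired sign on the transverse slice $\ker d\psi$, one corrects $\varphi$ by adding a suitable multiple of a homogeneous function whose Hessian fixes the transverse signature while preserving both the $\text{Aut}_\text{SL}(V)$-invariance and the degree-$2$ homogeneity.

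Part (ii) is then essentially formal. Applying Proposition 2.3 to $(V, g_{con})$ identifies $\text{Hess}_\CC(4\pi^*\varphi)$ as the K\"ahler metric $g^T$ associated to $g_{con}$ via the r-map. Invariance under $T_\text{SL}$ is inherited from the $\text{Aut}_\text{SL}(V)$-invariance of $\varphi$, since $T_\text{SL}$ acts on $U = \R^n \oplus \sqrt{-1}\, V$ by real-linear maps that commute with the standard complex structure on $\mathbb{C}^n$ and intertwine $\pi$. For the dilation: because $\pi(\lambda_q z) = q\, \pi(z)$ and $\varphi$ is degree-$2$ homogeneous on $V$, the pullback $\pi^*\varphi$ is degree-$2$ homogeneous on $U$; applying Proposition 2.5 and repeating verbatim the argument of Proposition 3.6 then gives $\lambda_q^* g = q^2 g$.

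The main obstacle is the positive-definiteness check in (i), which requires the explicit Hessian computation above together with a careful sign analysis along and transverse to the Euler field; once $\varphi$ with positive-definite Hessian is in hand, the rest of both parts follows formally from the r-map of Proposition 2.3, Proposition 2.5, and the equivariance of $F$ and $\pi$ under dilations and under the $\text{SL}$-part of $\text{Aut}(V)$.
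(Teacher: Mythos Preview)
The paper's own proof is a two-sentence sketch: it takes $\varphi$ to be ``a characteristic function of the cone'' with a citation to Vinberg for (i), and declares (ii) analogous to Theorem 7.2(iv). Your approach is substantially more explicit. Your replacement of $F$ by $\varphi = cF^{-2/n}$ is well motivated, since degree-$2$ homogeneity is exactly what forces the scaling $\lambda_q^*\text{Hess}\,\varphi = q^2\,\text{Hess}\,\varphi$ (note that the literal characteristic function $F$ is homogeneous of degree $-n$, so $\text{Hess}\,F$ scales as $q^{-n}$, not $q^2$; on this point the paper's sketch is itself imprecise). Your treatment of (ii) via Propositions 2.3, 2.5 and the argument of Proposition 3.6 is fine and matches the paper's analogy.

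There is, however, a real gap in the positivity step. From your own formula $\text{Hess}\,\varphi = \varphi\bigl(\tfrac{4}{n^2}\, d\psi\otimes d\psi - \tfrac{2}{n}\,\text{Hess}\,\psi\bigr)$ together with the identities you list, the splitting $T_xV = \R\xi \oplus \ker d\psi$ is $\text{Hess}\,\varphi$-orthogonal, with $\text{Hess}\,\varphi(\xi,\xi) = 2\varphi$ and $\text{Hess}\,\varphi|_{\ker d\psi} = -\tfrac{2\varphi}{n}\,\text{Hess}\,\psi|_{\ker d\psi}$. Since $\text{Hess}\,\psi>0$, these two blocks carry \emph{opposite} signs for every nonzero $c$, so $\text{Hess}(cF^{-2/n})$ is Lorentzian, never Riemannian, as soon as $n\ge 2$. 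The escape hatch you propose --- adding another $\text{Aut}_{\text{SL}}(V)$-invariant degree-$2$ homogeneous function --- is not available in general: for $V=(\R^{>0})^2$ the only such functions are scalar multiples of $xy$, and for the forward light cone in $\R^m$ they are scalar multiples of the defining quadratic form; in both cases the Hessian is irreparably indefinite. So either ``conical Hessian metric'' here is to be read in the pseudo-Riemannian sense, or this positivity issue needs a genuinely different idea that neither your proposal nor the paper's sketch supplies.
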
		
	\begin{proof}
	This theorem is similar to the previous one. The metric $g_{con}$ on a convex regular cone $V$ is defined as $\text{Hess}(\varphi)$,	where $\varphi$ is a characteristic function of the cone (see \cite{vinb}). The item (ii) is analogous to the item (iv) from Theorem 7.2.  
	\end{proof}

	 By the item (ii) from Theorem 7.2, $T$ is a Hessian Lie group. By the item (iv) from Theorem 7.2, $T\ltimes \R^n$ is a K\"ahler Lie group. On the other hand, we can get the same result using Theorem 6.2. Thus, Theorem 6.2 generalizes the item (iv) from Theorem 7.2. 
	 
	 Analogically, by Theorem 7.3, $T_\text{SL}$ is a projective Hessian Lie group and $T_\text{SL}\ltimes \R^n$ is a Sasakian Lie group. We can get the same result using Theorem 6.5. Thus, Theorem 6.5 generalizes known constructions for homogeneous regular cones.

		\begin{defin}
			The group $T$ from the Theorem 7.2 is called a {\bfseries Lie group associated with $V$}. The corresponding Lie algebra $\mathfrak{t}$ is a {\bfseries Lie algebra associated with $V$}.
		\end{defin}

		\begin{proposition}[\cite{vinb}]
			Let $T_\text{SL}$ be a Lie group from Theorem 6.3 then there exist a convex regular domain $W$ such that $T_\text{SL}$ is an associated with $W$ Lie group. Conversely, if $S$ is a Lie group associated with a regular convex domain $W$ then there exist regular convex $V$ such that $S=T_\text{SL}$, where $T_\text{SL}$ is as in Theorem 7.3. 
		\end{proposition}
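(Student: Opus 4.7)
The plan is to establish the two directions of the bijection separately by explicit constructions, essentially recovering Vinberg's correspondence between homogeneous convex regular cones and homogeneous convex regular domains.

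For the converse direction, given $S$ acting simply transitively on a homogeneous convex regular domain $W\subset \R^m$, I would form the open cone
\[
V := \{(tw, t) \in \R^{m}\oplus\R : w\in W,\ t>0\}
\]
over $W\times\{1\}$. Convexity and regularity of $V$ are inherited from those of $W$: a line in $V$ would either force $t=0$ or project to a line in $W$, neither of which is possible. The group $T := S\times \R^{>0}$ acts on $V$ with $S$ acting through the standard embedding $\text{Aff}(\R^m)\hookrightarrow \text{GL}(\R^{m+1})$ of its affine action on $W$ and $\R^{>0}$ acting by radial dilation; simple transitivity of this action follows at once from simple transitivity of $S$ on $W$. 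By construction $T\cap \text{SL}(\R^{m+1})\simeq S$, identifying $S$ with $T_\text{SL}$ in the sense of Theorem 7.3.

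For the forward direction, start with a homogeneous convex regular cone $V\subset \R^n$ and its simply transitive subgroup $T\subset \text{Aut}(V)$. The decomposition $\text{Aut}(V)=(\text{Aut}(V)\cap \text{SL}(\R^n))\times \R^{>0}$ noted before Theorem 7.3 restricts to $T=T_\text{SL}\times \R^{>0}$. Fix a basepoint $p\in V$; the orbit $W_0 := T_\text{SL}\cdot p$ is a hypersurface transverse to the radial direction, diffeomorphic as a manifold to the simply connected solvable group $T_\text{SL}$. Applying Vinberg's structural theory of clans (equivalently, $T$-algebras) attached to homogeneous convex cones, one produces affine coordinates on $\R^{n-1}$ together with a diffeomorphism $W_0\to W\subset \R^{n-1}$ onto a convex regular domain $W$ such that the transported $T_\text{SL}$-action on $W$ is affine and simply transitive. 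This realizes $T_\text{SL}$ as the Lie group associated with $W$.

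The main obstacle is the last step of the forward direction: proving that the $T_\text{SL}$-action on the transversal slice is linearizable, i.e.\ that the slice can be identified with a convex regular domain in Euclidean space on which $T_\text{SL}$ acts by affine transformations rather than merely by diffeomorphisms induced from radial projection. This is the substantive content behind Vinberg's clan construction and is not formal; with it granted, the two directions combine to give the stated bijection.
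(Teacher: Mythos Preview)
The paper does not give a proof of this proposition at all: it is stated with a citation to Vinberg \cite{vinb} and then used. So there is nothing in the paper to compare your argument against beyond noting that your sketch follows the classical Vinberg route the citation points to.

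Your outline is essentially correct and matches Vinberg's construction. A couple of small points are worth tightening. In the converse direction, the identification $T\cap \text{SL}(\R^{m+1})\simeq S$ is not literally ``by construction'': an element $s\in S$ with linear part $A_s$ embeds as a matrix of determinant $\det A_s$, so the map $S\to T_\text{SL}$ is $s\mapsto \bigl(s,\ \det(A_s)^{-1/(m+1)}\bigr)$ rather than the obvious inclusion; this is harmless but should be said. In the forward direction you implicitly use $T=T_\text{SL}\times \R^{>0}$, which is true for Vinberg's triangular group $T$ but deserves a word of justification (the radial dilations lie in $T$, and $T$ is connected solvable, so the determinant character splits it). Finally, you are right that the genuine content sits in realizing the $T_\text{SL}$-orbit as a convex regular domain with affine action; your honest acknowledgment that this relies on Vinberg's clan/$T$-algebra machinery is exactly the correct assessment --- this step is not formal and is precisely why the paper defers to \cite{vinb}.
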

		
		\begin{cor}
			Let $T$ be a Lie group associated with a homogeneous regular domain. Then $T$ is both a Hessian Lie group and a projective Hessian Lie group.
		\end{cor}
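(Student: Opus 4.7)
The plan is to handle the two assertions (Hessian and projective Hessian) separately, using Theorem 7.2 for the first and Proposition 7.5 together with Theorem 7.3 for the second.

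First I would prove that $T$ is a Hessian Lie group. By Theorem 7.2(ii), $T$ acts simply transitively on the homogeneous regular convex domain $V$ with which it is associated. Since $V$ is an open subset of $\R^n$ and $T\subset\mathrm{Aut}(V)\ltimes\R^n$ acts by affine transformations, the action furnishes an \'etale affine representation $T\to\mathrm{Aff}(\R^n)$, hence by Theorem 4.2 a left-invariant flat torsion-free connection $\nabla$ on $T$. The $\mathrm{Aut}(V)$-invariant Hessian metric $g_{can}=\mathrm{Hess}\,\varphi$ from Theorem 7.2(i) is in particular $T$-invariant, so its pullback along the orbit map $T\to V$ is a left-invariant Hessian metric on $(T,\nabla)$. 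This gives the Hessian Lie group structure.

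For the projective Hessian part I would use Proposition 7.5, which provides a homogeneous regular convex \emph{cone} $V'\subset\R^n$ such that $T=T_{\mathrm{SL}}$ in the notation of Theorem 7.3. Because $V'$ is a cone, $\mathrm{Aut}(V')=\mathrm{Aut}_{\mathrm{SL}}(V')\times\R^{>0}$, so $T\times\R^{>0}\subset\mathrm{Aut}(V')$ acts simply transitively on $V'$; this identifies $T\times\R^{>0}\simeq V'$ as $T\times\R^{>0}$-manifolds, in such a way that the $\R^{>0}$-factor corresponds to the dilation action $\lambda_q$. As in the previous paragraph, the flat affine structure of $\R^n$ restricted to $V'$ pulls back to a left-invariant flat torsion-free connection $\nabla$ on $T\times\R^{>0}$, and the Euler vector field $\xi(x)=x$ on $V'\subset\R^n$ satisfies $\nabla\xi=\mathrm{Id}$; under the identification above, $\xi$ corresponds to $t\partial/\partial t$, so $(T\times\R^{>0},\nabla,t\partial/\partial t)$ is radiant, which supplies the projective structure required by Definition 6.3. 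Theorem 7.3(i) gives a conical $\mathrm{Aut}_{\mathrm{SL}}(V')$-invariant Hessian metric $g_{con}$ on $V'$ with $\lambda_q^{*}g_{con}=q^{2}g_{con}$; pulling $g_{con}$ back to $T\times\R^{>0}$ produces a left-$T$-invariant Hessian metric which, by the scaling identity and the analogue of Proposition 3.4 in the Riemannian category, decomposes as $t^{2}g_{T}+dt^{2}$ for a left-invariant metric $g_{T}$ on $T$. Thus $(T\times\R^{>0},\nabla,t^{2}g_{T}+dt^{2})$ is a Hessian cone and $T$ is projective Hessian.

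The main obstacle is bookkeeping rather than depth: I need to verify that, under the identification $V'\simeq T\times\R^{>0}$ afforded by the simply transitive action, the radial Euler field of the cone $V'\subset\R^n$ really becomes $t\partial/\partial t$ and that the left $T$-action commutes with dilations $\lambda_q$, so that both the connection and the metric are left-invariant under all of $T\times\R^{>0}$. Once these compatibilities are checked, Definition 6.4 is satisfied verbatim and the corollary follows.
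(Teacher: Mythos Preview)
Your proposal is correct and follows exactly the route the paper intends: the paper gives no explicit proof of this corollary, but the surrounding text makes clear that the Hessian structure comes from Theorem~7.2 and the projective Hessian structure from Proposition~7.5 combined with Theorem~7.3, which is precisely your outline. Two minor slips worth fixing: Theorem~7.2(ii) places $T\subset\mathrm{Aut}(V)$ (not $\mathrm{Aut}(V)\ltimes\R^n$), and the cone-form statement you invoke is Proposition~3.5 rather than~3.4.
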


						\begin{example}
							Let $V$ be the vector space of all real symmetric matrices of rank $n$ and $\Omega$ the set of all positive definite symmetric matrices in $V$. Then $\Omega$ is a regular convex cone and the group of upper triangular matrix $\text{T}(\R^n)$ acts simply transitively on $\Omega$ by $s(x)= s x s^T$, where $x \in \Omega$ and $s \in \text{T}(n)$.The characteristic function is equal to
							$$
							\varphi(x)=(\det x)^{-\frac{n+1}{2}}\varphi (e),
							$$ 
							where $e$ is the unit matrix (see \cite{shima}).
						\end{example}

	\section{Projective Hessian structure on $\text{SO}(2)$ and $\text{SU}(2)$}
	
		\begin{example}
			Consider the group $\R$ as the universal covering of $\text{U}(1)=\text{SO(2)}$. The identification $\text{SO}(2)\times{\R^{>0}} \simeq \R^2 \backslash \{0\}$ sets a projective Hessian structure on $\text{SO}(2)$ and on the universal covering $\R$. The corresponding to $\R$ semi-Sasakian group $\R\ltimes \R^2$ is the universal covering to the group of Euclidean motions $\text{E}(2)=\text{SO}(2)\ltimes \R^2$. Hence, the Lie algebra of Euclidean motions $\mathfrak{e}(2)$ is semi-Sasakian. Let $t, x, y, r$ be standard coordinates on the group $\R\ltimes\R^2\times \R^{>0}$. The Kähler metric is given by $\text{Hess}_\mathbb{C} (r^2)$. The Kähler structure on $\R\ltimes\R^2\times \R^{>0}$ induces the Kähler structure on $\text{SO}(2)\ltimes\R^2\times \R^{>0}$. Thus, the group of Euclidean motions $\text{E}(2)$ is semi-Sasakian, too.  
			
		\end{example}
		
	All $3$-dimensional Sasakian Lie algebras are classified.
	
	\begin{proposition}[\cite{5sasaki}]
		Any 3-dimensional Sasakian Lie algebra is isomorphic to one of the
		following: $\mathfrak{su}(2), \mathfrak{sl}(2,\R),\mathfrak{aff}(\R)\times \R$, and the Heisenberg algebra $\mathfrak{h}_3$.
	\end{proposition}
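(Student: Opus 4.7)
The plan is to reduce the classification of 3-dimensional Sasakian Lie algebras to a case-by-case analysis over the Bianchi list of 3-dimensional real Lie algebras. Since the proposition is a structural classification statement, the natural strategy is: (a) enumerate all isomorphism classes of 3-dimensional real Lie algebras; (b) translate the Sasakian condition into an algebraic condition on $\mathfrak{g}$; (c) verify which classes admit a solution.

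For step (a) I would use the standard Bianchi classification, which gives nine families: $\mathbb{R}^3$, $\mathfrak{h}_3$, $\mathfrak{aff}(\mathbb{R})\times\mathbb{R}$, $\mathfrak{e}(2)$, $\mathfrak{e}(1,1)$, $\mathfrak{su}(2)$, $\mathfrak{sl}(2,\mathbb{R})$, and two one-parameter solvable families (Bianchi VI${}_h$ and VII${}_h$). For step (b) I would use the reformulation implicit in Section 3 and Section 5: a Sasakian structure on a simply connected Lie group $G$ corresponds to a $G$-invariant contact form $\eta\in\mathfrak{g}^*$ with $\eta\wedge d\eta\ne 0$, together with a complex structure $J$ on the cone Lie algebra $\mathfrak{g}\oplus\mathbb{R}\xi$ that is integrable and makes the cone metric $t^2 g_{\mathfrak{g}}+dt^2$ Kähler, where $g_{\mathfrak{g}}$ is the associated metric $d\eta(\cdot,\Phi\cdot)+\eta\otimes\eta$ for a suitable $(1,1)$-endomorphism $\Phi$. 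Equivalently, one asks that the Reeb vector $\xi$ act by isometries and that $(\Phi,\xi,\eta,g_{\mathfrak{g}})$ satisfy the standard compatibility $(\nabla_X\Phi)Y=g(X,Y)\xi-\eta(Y)X$.

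For step (c), the contact condition $\eta\wedge d\eta\ne 0$ already discards the abelian $\mathbb{R}^3$ (where $d=0$), and a short computation on $\mathfrak{e}(1,1)$ and on the generic Bianchi VI${}_h$/VII${}_h$ families shows either the contact condition fails for invariant forms or the cone fails to admit a compatible integrable complex structure (one can test integrability by checking that the $+i$-eigenspace of $J$ on the complexified cone algebra is a subalgebra). The four surviving classes $\mathfrak{su}(2)$, $\mathfrak{sl}(2,\mathbb{R})$, $\mathfrak{aff}(\mathbb{R})\times\mathbb{R}$, $\mathfrak{h}_3$ are exactly the 3-dimensional unimodular Lie algebras together with $\mathfrak{aff}(\mathbb{R})\times\mathbb{R}$; for each I would write down an explicit contact form (e.g.\ the dual of the center for $\mathfrak{h}_3$; a Maurer--Cartan component on $\mathfrak{su}(2)$ via the Hopf fibration; the analogous $\mathrm{PSL}(2,\mathbb{R})\to\mathbb{H}^2$ picture for $\mathfrak{sl}(2,\mathbb{R})$; and the product of the contact form on $\mathfrak{aff}(\mathbb{R})$ with the extra $\mathbb{R}$-direction serving as the Reeb direction) and exhibit the Kähler cone directly.

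The main obstacle I expect is step (c) for the unimodular non-semisimple cases, specifically ruling out $\mathfrak{e}(2)$ and $\mathfrak{e}(1,1)$ as well as the solvable Bianchi families. The contact condition alone is not enough to exclude $\mathfrak{e}(2)$ (indeed Example 8.1 shows $\mathfrak{e}(2)$ is merely \emph{semi}-Sasakian), so the obstruction must come from the integrability or positivity of the cone complex structure. Concretely, one has to show that for $\mathfrak{e}(2)$ every invariant almost complex structure on the cone $\mathfrak{e}(2)\oplus\mathbb{R}$ either fails Newlander--Nirenberg integrability or fails to give a positive definite Kähler metric, and the cleanest way is to enumerate $\mathrm{ad}$-invariant data and reduce the problem to a finite algebraic check in the structure constants.
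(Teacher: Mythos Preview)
The paper does not prove this proposition at all: it is stated with a citation to \cite{5sasaki} (Andrada--Fino--Vezzoni) and used as a black box to conclude that $\mathfrak{e}(2)$ is not Sasakian. So there is nothing to compare your argument against in this paper; your proposal is an attempt to reprove a result the author simply imports.

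That said, your outline is broadly the right shape and matches the strategy of the cited source: run through the Bianchi list, impose the contact condition, then check integrability and positivity of the cone complex structure. A couple of inaccuracies in your sketch are worth flagging. First, your parenthetical that the four surviving algebras ``are exactly the 3-dimensional unimodular Lie algebras together with $\mathfrak{aff}(\mathbb{R})\times\mathbb{R}$'' is false: $\mathfrak{e}(2)$, $\mathfrak{e}(1,1)$ and $\mathbb{R}^3$ are unimodular yet excluded, while $\mathfrak{aff}(\mathbb{R})\times\mathbb{R}$ is not unimodular. Second, there is no ``contact form on $\mathfrak{aff}(\mathbb{R})$'' (it is even-dimensional); what you want is a contact form on the 3-dimensional algebra $\mathfrak{aff}(\mathbb{R})\times\mathbb{R}$ with Reeb vector in the $\mathbb{R}$-factor. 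Finally, as you yourself anticipate, the real work is excluding $\mathfrak{e}(2)$: the contact condition holds there, so you genuinely need the Sasakian normality/integrability obstruction, and your proposal stops short of carrying this out. If you want a self-contained argument you will have to actually perform that computation rather than just announce it as ``a finite algebraic check.''
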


	The algebras  $\mathfrak{su}(2)$ and $\mathfrak{sl}(2,\R)$ are semisimple, and the algebras $\mathfrak{aff}(\R)\times \R$ and $\mathfrak{h}_3$ are nilpotent. The algebra $\mathfrak{e}(2)$ is solvable but not nilpotent. Therefore, the algebra  $\mathfrak{e}(2)$  is semi-Sasaian but not Sasakian.

	\begin{example}
		There is an identifications $\text{SU}(2)\simeq S^3$ and $\text{SU}(2)\times \R^{>0}=\R^4 \backslash \{0\}$. The group structure on $S^3$ equals to the restriction on $S^3$ of the standard
		$\text{SU}(2)$-action on $\mathbb{C}^2\simeq\R^4$. The corresponding semi-Sasakian Lie group is equal to $\text{SU}(2)\ltimes \mathbb{C}^2$.  
	\end{example}

	Notice that the group $\text{SU}(2)$ is not Hessian just because the sphere $S^3$ does not admit an affine structure. However, there exists an invariant affine structures on $\text{SU}(2)\times \R^{>0} \simeq \R^4 \backslash \{0\}$. Moreover, if the group $G$ is one of the previous examples of projective Hessian Lie groups (a clan or $\text{U}(1)$) then the group $G\times \R^{>0}$ is Hessian. We prove that the group $\text{SU}(2)\times \R^{>0}$ is not Hessian.

	\begin{proposition}
		The manifold $S^3\times \R^{>0}$ does not admit an $\R^{>0}$-invariant Hessian structure. 
	\end{proposition}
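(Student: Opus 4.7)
The plan is to argue by contradiction. Suppose $(\nabla, g)$ is an $\R^{>0}$-invariant Hessian structure on $M = S^3 \times \R^{>0}$. Since $M$ is simply connected, the flat affine structure $\nabla$ admits a globally defined developing map $D \colon M \to \R^4$ that is a local diffeomorphism, and the $\R^{>0}$-action corresponds via $D$ to a one-parameter affine subgroup $\rho \colon \R^{>0} \to \text{Aff}(\R^4)$ whose infinitesimal generator is an affine vector field $X(y) = Ay + b$ on $\R^4$. Likewise, $M$ being simply connected and $g$ being Hessian provide a global strictly convex potential $\varphi$ on the image $U = D(M)$ with $g = D^*\text{Hess}\,\varphi$, and the $\R^{>0}$-invariance of $g$ is equivalent to the function $X\varphi$ being affine in $y$ on $U$.

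A topological comparison between orbit spaces greatly restricts the possibilities for $\rho$: the source orbit space $M/\R^{>0} = S^3$ is compact, whereas the orbit space of any fixed-point-free affine action on a $\rho$-invariant open subset of $\R^4$ is non-compact, so such an action admits no equivariant local diffeomorphism from $M$. In the fixed-point case one translates to $b = 0$, and only matrices $A$ with positive real spectrum yield a quotient homeomorphic to $S^3$. The central model is the standard dilation $\rho(q)y = qy$ with $U = \R^4 \setminus \{0\}$; then integrating $X\varphi$ affine along each ray yields, in polar coordinates,
$$
\varphi(r\theta) = a(\theta)\,r + b\,\log r + c(\theta), \qquad r > 0,\ \theta \in S^3,
$$
where $a, c \in C^\infty(S^3)$ and $b \in \R$ is a constant.

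A direct computation of $\text{Hess}\,\varphi$ in polar coordinates then shows that the radial component equals $\text{Hess}\,\varphi(\partial_r,\partial_r) = -b/r^2$, forcing $b < 0$, while the tangential block on $T_\theta S^3$ reads $(\text{Hess}_{S^3} a)_{\alpha\beta}\,r + (\text{Hess}_{S^3} c)_{\alpha\beta} + (g_{S^3})_{\alpha\beta}(a\,r + b)$. Let $\theta_0$ be a point where $c$ attains its maximum on $S^3$; then $\text{Hess}_{S^3} c(\theta_0) \leq 0$, and since $b < 0$ makes $b\,g_{S^3}$ strictly negative definite, the constant-in-$r$ part at $\theta_0$ is the sum of a nonpositive and a strictly negative definite matrix, hence strictly negative definite. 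For $r > 0$ small enough, the linear-in-$r$ contribution cannot reverse this, so the tangential block is negative definite at $(r,\theta_0)$, contradicting the strict convexity of $\varphi$. The remaining positive-spectrum linear actions are dispatched by an analogous argument in generalized polar coordinates, where the same compactness of $S^3$ is exploited. The main obstacle is the topological reduction step that eliminates fixed-point-free and nonisotropic linear actions; it rests on a careful comparison of the $\R^{>0}$-orbit spaces via compactness and simple connectedness of $S^3$ against the $\pi_1$ and compactness properties of quotients of invariant open sets in $\R^4$.
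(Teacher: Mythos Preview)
Your approach is entirely different from the paper's, and as written it has a genuine gap. The paper's argument is three lines: since the Hessian structure is $\R^{>0}$-invariant, it descends to the quotient $S^3\times\R^{>0}/(x,t)\sim(x,2t)\simeq S^3\times S^1$, which is then a \emph{compact} Hessian manifold; by Shima's theorem every compact Hessian manifold has universal cover a convex domain in $\R^n$; but the universal cover $S^3\times\R^{>0}$ is not contractible (its $\pi_3$ is $\Z$), so it cannot be convex. No developing-map analysis, no case distinction on one-parameter affine groups.

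Your direct route avoids Shima's theorem, and the ``central model'' computation is essentially sound: in polar coordinates the radial block forces $b<0$, and then at a maximum of $c$ the tangential block $\text{Hess}_{S^3}c + b\,g_{S^3}$ is nonpositive plus negative definite, contradicting convexity for small $r$. (In fact $a(\theta)=\langle v,\theta\rangle$ is the restriction of a linear functional, so the $r$-term vanishes identically and you do not even need $r\to 0$.) The problem is the reduction to that central model, which you yourself flag as ``the main obstacle'' and then do not carry out. The claim that fixed-point-free affine flows on an invariant open set of $\R^4$ have non-compact orbit space, and that this blocks an equivariant local diffeomorphism from $S^3\times\R^{>0}$, needs the target action to be free and proper to pass to orbit spaces --- and many one-parameter affine groups are neither. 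Likewise ``only matrices $A$ with positive real spectrum yield a quotient homeomorphic to $S^3$'' is at best ambiguous (complex eigenvalues with positive real part also give proper free actions with quotient $S^3$) and in any case is asserted, not proved; the promised ``analogous argument in generalized polar coordinates'' for these cases is absent. Until that classification is actually established, the proof is incomplete.
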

	\begin{proof}
		If $S^3\times \R^{>0}$ admits an $\R^{>0}$-invariant Hessian structure then the Hopf manifold 
		$$S^3\times \R^{>0} /_{(x\times t) \sim (x\times 2t)} \simeq S^3\times S^1
		$$ 
		admits a Hessian structure. In \cite{shima}, Shima proved that if $M$ is compact Hessian manifold then the universal covering $\tilde{M}$ is convex domain in $\R^n$. The universal covering of the Hopf manifold is diffeomorphic to $S^3\times \R^{>0}$ that cannot be diffeomorphic to any domain in $\R^4$. Therefore, $S^3\times \R^{>0}$ does not admit an $\R^{>0}$-invariant Hessian structure.
	\end{proof}
	
	\begin{cor}
		The group $\text{SU}(2)\times \R^{>0}$ is not Hessian.
	\end{cor}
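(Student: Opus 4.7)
The plan is to deduce Corollary 8.5 directly from Proposition 8.4 by a short invariance argument, so most of the work is already done. Suppose for contradiction that $\mathrm{SU}(2)\times\R^{>0}$ is a Hessian Lie group. By Definition 6.1 this means there exists a left-invariant flat torsion-free connection $\nabla$ and a left-invariant Hessian metric $g$ on the Lie group $\mathrm{SU}(2)\times\R^{>0}$.

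Next I would isolate the key point: left-invariance automatically gives invariance under the closed subgroup $\{e\}\times\R^{>0}\subset \mathrm{SU}(2)\times\R^{>0}$, whose action by left translation on the underlying manifold is precisely the $\R^{>0}$-action on the second factor. Since there is a diffeomorphism $\mathrm{SU}(2)\times\R^{>0}\simeq S^3\times\R^{>0}$ intertwining these two $\R^{>0}$-actions, transporting $(\nabla, g)$ along this diffeomorphism produces an $\R^{>0}$-invariant Hessian structure on $S^3\times\R^{>0}$.

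This, however, is ruled out by Proposition 8.4. So the assumption that $\mathrm{SU}(2)\times\R^{>0}$ admits a left-invariant Hessian structure is false, which is exactly the statement of the corollary. The one subtlety worth spelling out, and the only place where there is anything to check, is that the diffeomorphism $\mathrm{SU}(2)\times\R^{>0}\simeq S^3\times\R^{>0}$ really is equivariant for the left $\R^{>0}$-action; this is immediate from the identification $\mathrm{SU}(2)\simeq S^3$ (both factors literally agreeing). No further computation is required, so there is no substantive obstacle — the work has all been absorbed into Proposition 8.4.
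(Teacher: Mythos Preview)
Your argument is correct and is exactly the intended deduction: a left-invariant Hessian structure on $\mathrm{SU}(2)\times\R^{>0}$ is in particular invariant under the subgroup $\{e\}\times\R^{>0}$, and via the identification $\mathrm{SU}(2)\simeq S^3$ this contradicts Proposition~8.4. The paper states the corollary without proof, so you have simply spelled out the immediate step it leaves implicit.
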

	
	\paragraph*{Acknowledgements.} I would like to thank M. Verbitsky for fruitful discussions, and {D.V. Alekseevsky}, for his useful comments and help with preparation of the paper.

\end{document}